\newtheorem{theorem}{Theorem}[section]
\newtheorem{lemma}[theorem]{Lemma}
\newtheorem{proposition}[theorem]{Proposition}
\newtheorem{corollary}[theorem]{Corollary}
\theoremstyle{definition}
\theoremstyle{remark}
\numberwithin{equation}{section}
\newcommand{\GL}{{\mathrm {GL}}}
\newcommand{\PGL}{{\mathrm {PGL}}}
\newcommand{\SL}{{\mathrm {SL}}}
\newcommand{\PSL}{{\mathrm {PSL}}}
\newcommand{\POmega}{{\mathrm {P\Omega}}}
\newcommand{\GU}{{\mathrm {GU}}}
\newcommand{\PGU}{{\mathrm {PGU}}}
\newcommand{\PSU}{{\mathrm {PSU}}}
\newcommand{\GO}{{\mathrm {GO}}}
\newcommand{\SO}{{\mathrm {SO}}}
\newcommand{\Sp}{{\mathrm {Sp}}}
\newcommand{\PSp}{{\mathrm {PSp}}}
\newcommand{\PCSp}{{\mathrm {PCSp}}}
\newcommand{\Aut}{{\mathrm {Aut}}}
\newcommand{\Out}{{\mathrm {Out}}}
\newcommand{\Irr}{{\mathrm {Irr}}}
\newcommand{\Syl}{{\mathrm {Syl}}}
\newcommand{\ZZ}{{\mathbb Z}}
\newcommand{\NN}{{\mathbb N}}
\newcommand{\GG}{{\mathbb G}}
\newcommand{\FF}{{\mathbb F}}
\newcommand{\ta}{\hspace{0.5mm}^{2}\hspace*{-0.2mm}}
\newcommand{\tb}{\hspace{0.5mm}^{3}\hspace*{-0.2mm}}
\newcommand{\wt}{\widetilde}
\newcommand{\bC}{{\mathbf{C}}}
\newcommand{\bG}{{\mathbf{G}}}
\newcommand{\bS}{{\mathbf{S}}}
\newcommand{\bH}{{\mathbf{H}}}
\newcommand{\bT}{{\mathbf{T}}}
\newcommand{\bL}{{\mathbf{L}}}
\newcommand{\bO}{{\mathbf{O}}}
\newcommand{\bN}{{\mathbf{N}}}
\newcommand{\bZ}{{\mathbf{Z}}}
\newcommand{\Al}{\textup{\textsf{A}}}
\newcommand{\Sy}{\textup{\textsf{S}}}
\newcommand{\tw}[1]{{}^#1}
\newcommand{\pcore}{\mathbf{O}}
\begin{document}

\title[On H\'{e}thelyi-K\"{u}lshammer's conjecture for principal blocks]
{On H\'{e}thelyi-K\"{u}lshammer's conjecture\\ for principal blocks}

\author[N.\,N. Hung]{Nguyen Ngoc Hung}
\address{Department of Mathematics, The University of Akron, Akron,
OH 44325, USA} \email{hungnguyen@uakron.edu}

\author[A. A. Schaeffer Fry]{A. A. Schaeffer Fry}
\address{Department of Mathematics and Statistics, Metropolitan State University of Denver, Denver, CO 80217, USA}
\email{aschaef6@msudenver.edu}

\thanks{The first author thanks Gunter Malle for several stimulating discussions on the relation
between the relative Weyl groups of $e$-split Levi subgroups and
maximal tori.
The second author is partially supported by the National
Science Foundation under Grant No. DMS-1801156.}

\subjclass[2010]{Primary 20C20, 20C15, 20C33, 20D06}

\keywords{finite groups, principal blocks, characters,
H\'{e}thelyi-K\"{u}lshammer conjecture, Alperin-McKay conjecture}

%\date{}

\begin{abstract} We prove that the number of irreducible ordinary
characters in the principal $p$-block of a finite group $G$ of order
divisible by $p$ is always at least $2\sqrt{p-1}$. This confirms a
conjecture of H\'{e}thelyi and K\"{u}lshammer
\cite{Hethelyi-kulshammer} for principal blocks and provides an
affirmative answer to Brauer's Problem 21 \cite{Brauer63} for
principal blocks of bounded defect. Our proof relies on recent works
of Mar\'{o}ti \cite{Maroti16} and Malle-Mar\'{o}ti
\cite{Malle-Maroti} on bounding the conjugacy class number and the
number of $p'$-degree irreducible characters of finite groups,
earlier works of Brou\'{e}-Malle-Michel \cite{Broue-Malle-Michel}
and Cabanes-Enguehard \cite{Cabanes-Enguehard94,Cabanes-book} on 
the distribution of characters into unipotent blocks and
$e$-Harish-Chandra series of finite reductive groups, and known
cases of the Alperin-McKay conjecture.
\end{abstract}

\maketitle

%%%%%%%%%%%%%%%%%%%%%%%%%%%%%%%%%%%%%%%%%%%%%%%%%%%%%%%%%%%%%%%%%%%%%%

\section{Introduction}

Bounding the number $k(G)$ of conjugacy classes of a finite group
$G$ in terms of a certain invariant associated to $G$ is a
fundamental problem in group representation theory. Let $p$ be a
prime dividing the order of $G$. As observed by Pyber, a result of
Brauer \cite{Brauer42} on groups $G$ of order divisible by $p$ but
not by $p^{2}$ implies that $k(G) \geq 2 \sqrt{p-1}$ for those
groups, and the bound was later conjectured to be true
for all finite groups. After several earlier partial results
\cite{Hethelyi-kulshammer,HK,Malle06,Keller09,Hethelyietal}, the
conjecture was finally proved by Mar\'{o}ti \cite{Maroti16}.

An equally important problem in modular representation theory is to
bound the number $k(B)$ of ordinary irreducible characters in a
block. It is not surprising that this problem is closely related to
the previous one on bounding $k(G)$; for instance, the $p$-solvable
case of the celebrated Brauer's $k(B)$-conjecture \cite[Problem
20]{Brauer63}, which asserts that $k(B)$ is bounded \emph{above} by
the order of a defect group for $B$, was known to be equivalent to
the coprime $k(GV)$-problem, which in turn was eventually solved in
2004 \cite{Glucketal,Schmid}. While there have been a number of
results on upper bounds for $k(B)$
\cite{Brauer-Feit,Robinson,Sambale17,malle17}, not much has been
done on lower bounds.

In the proof of $k(G) \geq 2 \sqrt{p-1}$ for solvable groups,
H\'{e}thelyi and K\"{u}lshammer
\cite[Remark~(ii)]{Hethelyi-kulshammer} speculated that
``\emph{perhaps it is even true that $k(B) \geq 2\sqrt{p -1}$ for
every $p$-block $B$ of positive defect, where $k(B)$ denotes the
number of irreducible ordinary characters in $B$}''. Of course,
H\'{e}thelyi and K\"{u}lshammer were aware of blocks of defect zero,
which have a unique irreducible ordinary character (whose degree has
the same $p$-part as the order of the group) and a unique
irreducible Brauer character as well, see \cite[Theorem
3.18]{Navarro}.

The main aim of this paper is to confirm
H\'{e}thelyi-K\"{u}lshammer's conjecture for principal blocks.

\begin{theorem}\label{main-theorem}
Let $G$ be a finite group and $p$ a prime such that $p\mid |G|$. Let
$B_0(G)$ denote the principal $p$-block of $G$. Then $k(B_0(G))\geq
2\sqrt{p-1}$.
\end{theorem}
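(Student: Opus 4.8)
My plan is to reduce Theorem~\ref{main-theorem} to the case where $G$ is as close to a quasi-simple group as possible, and then to analyze the principal block of groups of Lie type directly using the $e$-Harish-Chandra theory of Cabanes--Enguehard and Brou\'e--Malle--Michel. First I would set up the general reduction: if $p^2 \nmid |G|$, then a Sylow $p$-subgroup is cyclic of order $p$, and Brauer's theory of blocks with cyclic defect groups gives $k(B_0(G)) = e + (p-1)/e$ where $e \mid p-1$ is the inertial index; since $e + (p-1)/e \geq 2\sqrt{p-1}$ by AM--GM, this case is immediate. So I may assume $p^2 \mid |G|$, and in particular $p \geq 2$ with $p - 1$ not too small. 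Next I would quotient by $\pcore_{p'}(G)$, which does not change the principal block or its character count since $B_0(G/\pcore_{p'}(G))$ dominates $B_0(G)$ with $k(B_0(G)) = k(B_0(G/\pcore_{p'}(G)))$; so I may assume $\pcore_{p'}(G) = 1$, whence $\pcore_p(G) \neq 1$ would force $G$ to have a normal $p$-subgroup. If $\pcore_p(G) \neq 1$, then $B_0(G)$ is the unique block and $k(B_0(G)) = k(G) \geq 2\sqrt{p-1}$ by Mar\'oti's theorem. Thus the essential case is $\pcore_{p'}(G) = \pcore_p(G) = 1$, so that $E(G) = F^*(G)$ is a product of quasi-simple groups permuted by $G$, and the components have order divisible by $p$.

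For this core case I would pass to a component: if $S$ is a simple group occurring in $\Soc(G/\pcore_{p'}(G))$ with $p \mid |S|$, then by the theory of covering blocks and the fact that characters in $B_0(G)$ restrict to sums of characters in $B_0$ of the normal subgroup generated by a component, it suffices (after some care with the action of $G$ by automorphisms and with the diagonal/field twisting) to bound $k(B_0)$ below for the almost simple situation, i.e.\ for $S \leq G \leq \Aut(S)$, or even to get $k(B_0(S)) \geq 2\sqrt{p-1}$ for $S$ simple. The reason a bound for the simple group suffices is essentially Alperin--McKay together with counting: controlling $k(B_0)$ for almost simple groups can be achieved via the known cases of the Alperin--McKay conjecture for principal blocks (which for many $S$ is a theorem), reducing $k(B_0(G))$ to a count of height-zero characters of the normalizer of a Sylow $p$-subgroup, which in turn is bounded below by invariants of $S$. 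So the crux becomes: \emph{for every finite simple group $S$ with $p \mid |S|$, $k(B_0(S)) \geq 2\sqrt{p-1}$}, and more robustly a version that survives multiplication by outer automorphisms.

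The heart of the argument is therefore the groups of Lie type in their defining or non-defining characteristic. For $S$ of Lie type in characteristic $p$ (the defining-characteristic case), the principal block is the whole group and $k(B_0(S)) = k(S)$, so Mar\'oti's bound $k(S) \geq 2\sqrt{p-1}$ suffices. For $S$ of Lie type in characteristic $r \neq p$, I would use the $d$-Harish-Chandra theory: fixing $e = e_p(q)$ (the multiplicative order of $q$ mod $p$, adjusted for $p=2$), the unipotent characters in the principal block are exactly those in $e$-Harish-Chandra series above $e$-split Levi subgroups, and more generally $B_0(S)$ contains all characters lying over the principal block of the relevant $e$-split Levi, with the count governed by the number of unipotent characters of the relative Weyl group $W_{\bG}(\bL,\lambda)$ together with the Lusztig series attached to semisimple classes of $p'$-order. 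Concretely I would bound $k(B_0(S))$ below by the number of unipotent characters in the principal block (controlled by relative Weyl group combinatorics à la Brou\'e--Malle--Michel) plus contributions from non-trivial $p'$-semisimple elements in the centralizer structure; the key numerical input is that $p \mid |S|$ forces a large cyclotomic polynomial factor $\Phi_e(q)$ in $|S|$, and since $p \leq \Phi_e(q)$ roughly, one gets $\sqrt{p-1} \lesssim \sqrt{q}$, while the number of unipotent characters and Lusztig-series characters in $B_0$ grows at least linearly in $q$ (or in the rank), giving the bound with substantial room to spare. For sporadic groups and alternating groups one checks directly: for $\Al_n$ one uses the hook-length / abacus description of blocks, where the number of characters in a block of weight $w$ is a partition-counting function that dwarfs $2\sqrt{p-1}$ once $p \mid n!$ with $p^2 \mid n!$; for the $26$ sporadic groups plus the Tits group one consults the known decomposition matrices / Atlas data.

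I expect the main obstacle to be the uniform treatment of groups of Lie type in non-defining characteristic: one must show that the $e$-Harish-Chandra / unipotent-block count in $B_0(S)$ is at least $2\sqrt{p-1}$ \emph{uniformly across all types and ranks}, and the genuinely delicate regime is small rank with $e$ large — for instance when $S$ has rank $1$ or $2$ but $p$ is a large primitive prime divisor of $q^e - 1$ with $e$ close to the Coxeter number, so that $B_0(S)$ has a cyclic or small defect group and only a handful of characters. Precisely there the generic bound fails to have slack and one must fall back on the exact block-theoretic formula for $k(B_0)$ in terms of the inertial index (as in the cyclic defect case) or on explicit knowledge of Brauer trees, verifying $e' + (p-1)/e' \geq 2\sqrt{p-1}$ or its analog by hand. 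Managing this boundary case — reconciling the generic Deligne--Lusztig counting with the exact small-defect block theory, and making sure the almost-simple reduction (involving field and diagonal automorphisms, which can both merge and split characters of $B_0(S)$) does not destroy the inequality — is where the real work lies.
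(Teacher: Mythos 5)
Your overall architecture — reduce to (almost) simple groups, handle defining characteristic via Mar\'oti, and use $e$-Harish-Chandra theory for non-defining characteristic — is the same as the paper's, but there are two concrete gaps, one of which is the missing central idea.

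\textbf{The product-of-components case is not actually handled.} After your reduction you reach the situation where $E(G)=F^*(G)$ is a product of $k$ copies of a non-abelian simple group $S$, and you wave this away with ``after some care with the action of $G$ by automorphisms'' and an appeal to Alperin--McKay. Neither works. The Alperin--McKay conjecture is not known for arbitrary groups, so you cannot use it as a black box to pass from the component to the full almost simple quotient (the paper uses AM only in the form of Koshitani--Sp\"ath for cyclic defect, and otherwise proves the almost simple case directly). More importantly, knowing $k(B_0(S))\ge 2\sqrt{p-1}$ for a single copy of $S$ is nowhere near enough when $k\ge 2$: the relevant count is the number of $G$-orbits on $\Irr(B_0(S_1))\times\cdots\times\Irr(B_0(S_k))$, which can be as small as roughly $n^2/2$ where $n$ is the number of $\Aut(S)$-orbits on $\Irr(B_0(S))$. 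The paper's genuinely new ingredient here is the \emph{fourth-root} bound: it proves that when the Sylow $p$-subgroups of $S$ are not cyclic (and $p\ge 11$), the number of $\Aut(S)$-orbits on $\Irr(B_0(S))$ is at least $2(p-1)^{1/4}$, and then $n(n+1)/2 > 2\sqrt{p-1}$ closes the case $k\ge 2$. Without an orbit bound of that quality, your reduction does not go through, and this is precisely where the ``care with diagonal/field twisting'' has to be turned into a hard theorem (the paper's Theorem \ref{main-thm-simple}(ii), which takes most of Sections 5--8 to prove).

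\textbf{Two factual slips.} First, for a finite reductive group the characters in the principal $p$-block lie in Lusztig series $\mathcal{E}(\bG^F,(t))$ with $t$ a $p$-element of $(\bG^*)^F$, not a $p'$-element; you wrote ``$p'$-semisimple'' twice, and if taken literally this would have you counting characters that are \emph{not} in $B_0$ (the $p'$-part of the semisimple label indexes the block, not the characters inside it). Second, in defining characteristic $B_0(S)$ is not all of $\Irr(S)$: the Steinberg character sits alone in a defect-zero block, so $k(B_0(S))=k(S)-1$; this doesn't affect the estimate but should be stated correctly. Your treatment of the cyclic-defect boundary via $e+(p-1)/e\ge 2\sqrt{p-1}$ is fine for $|P|=p$, but when $P$ is cyclic of order $p^n$ with $n\ge 2$ the formula is $e+(p^n-1)/e$ and you should say so, or simply appeal (as the paper does) to the Koshitani--Sp\"ath result that the inductive AM conditions hold for all blocks with cyclic defect groups.
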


Problem 21 in Brauer's famous list \cite{Brauer63} asks whether
there exists a function $f(q)$ on prime powers $q$ such that
$f(q)\rightarrow \infty$ for $q\rightarrow \infty$ and that
$k(B)\geq f(p^{d(B)})$ for every $p$-block $B$ of defect $d(B)>0$.
Our Theorem~\ref{main-theorem} provides an affirmative answer to
this question for principal blocks of \emph{bounded} defect.

Building upon the ideas in \cite{Maroti16} and the subsequent paper
\cite{Malle-Maroti} of Malle and Mar\'{o}ti on bounding the number
of $p'$-degree irreducible characters in a finite group, we observe
that H\'{e}thelyi-K\"{u}lshammer's conjecture for principal blocks
is essentially a problem on bounding the number of irreducible
ordinary characters in principal blocks of almost simple groups, as
well as bounding the number of orbits of irreducible characters in
principal blocks of simple groups under the action of their
automorphism groups.

\begin{theorem}\label{main-thm-simple} Let $S$ be a non-abelian simple group and $p$ a
prime such that $p\mid |S|$. Let $G$ be an almost simple group with
socle $S$ such that $p\nmid |G/S|$. Then
\begin{itemize}
\item[(i)] $k(B_0(G))\geq 2\sqrt{p-1}$. Moreover, $k(B_0(G))>
2\sqrt{p-1}$ if $S$ does not have cyclic Sylow $p$-subgroups.

\item[(ii)] Assume furthermore that $p\geq 11$ and $S$ does not have cyclic Sylow $p$-subgroups. Then
the number of $\Aut(S)$-orbits on $\Irr(B_0(S))$ is at least
$2(p-1)^{1/4}$.
\end{itemize}
\end{theorem}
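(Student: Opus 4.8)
The plan is to reduce both parts to a case analysis over the families of non-abelian simple groups, using the classification, and in each case to exhibit enough characters (respectively enough $\Aut(S)$-orbits of characters) in the principal $p$-block. For part (i), I would first dispose of the case where $S$ has cyclic Sylow $p$-subgroups: here Brauer's theory of blocks with cyclic defect group applies, so $k(B_0(S))$ is determined by the inertial index $e$ and the number of simple modules, and one gets $k(B_0(G)) = e + (|D|-1)/e$ where $D$ is the defect group; since $p = |D|$ when $D$ is cyclic of order $p$ and more generally $|D| \geq p$, the elementary inequality $e + (|D|-1)/e \geq 2\sqrt{|D|-1} \geq 2\sqrt{p-1}$ gives the bound, with equality analysis showing when it is strict. (For the almost simple $G$ one must check that passing from $S$ to $G$, with $p \nmid |G/S|$, does not decrease $k(B_0)$ — this follows because $B_0(G)$ covers $B_0(S)$ and the relevant Clifford theory over a $p'$-extension only increases or preserves the count.) The non-cyclic case is where the real work lies.

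For the non-cyclic case I would go family by family. For alternating and symmetric groups, sporadic groups, and the Tits group, this is a finite check (sporadics) or a combinatorial count via hooks and cores (symmetric groups), and one verifies $k(B_0) > 2\sqrt{p-1}$ directly; since $p \mid |S|$ and $S$ is non-abelian simple, $p$ is bounded in the sporadic cases so these are genuinely finite. The bulk of the argument is for $S$ of Lie type in defining or non-defining characteristic. Here I would invoke the theory recalled in the abstract: the work of Cabanes--Enguehard on the distribution of characters into unipotent $\ell$-blocks, and Broué--Malle--Michel on $e$-Harish-Chandra series, to count unipotent characters lying in the principal block, together with the fact that the principal block also contains characters from Lusztig series of suitable semisimple elements. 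The key mechanism is that for $S = S(q)$ of Lie type with $p \mid |S|$, letting $e = e_p(q)$ be the order of $q$ mod $p$ (or mod $4$ adjustments for $p=2$), the principal block is the unipotent $e$-block, and its irreducible characters are controlled by the relative Weyl group $W_G(\bL, \lambda)$ of an $e$-split Levi subgroup; one needs a lower bound on the number of such characters in terms of $p$. The size of the Sylow $p$-subgroup is governed by the $p$-part of $|S|$, which via cyclotomic polynomial factorizations grows with $q$ and with the rank, so a non-cyclic Sylow forces either large rank or a large power of $\Phi_e(q)$, and in either regime the relative Weyl group (or the number of semisimple classes contributing to $B_0$) is correspondingly large. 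I would organize this by: (a) bounding the defect $p^{d}$ below in terms of $p$ and the structure of $S$, (b) bounding $k(B_0(S))$ below in terms of $d$ and the type, (c) combining to beat $2\sqrt{p-1}$ — and similarly $2(p-1)^{1/4}$ after dividing by $|\Out(S)|$, which is polylogarithmic in $q$ and hence negligible against the polynomial-in-$q$ growth of $k(B_0)$.

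For part (ii), once $p \geq 11$ and the Sylow is non-cyclic, the strategy is to take the bound on $k(B_0(S))$ from part (i) and divide by $|\Out(S)|$. For alternating and sporadic groups $|\Out(S)| \leq 4$, so $2(p-1)^{1/4}$ follows easily from $k(B_0(S)) > 2\sqrt{p-1}$ once $p$ is not too small (the hypothesis $p \geq 11$ handles the finitely many small exceptions). For Lie type, the point is that $|\Out(S)|$ divides $d \cdot f \cdot g$ where $d$ is the order of the diagonal outer automorphisms, $f$ the order of the field automorphism group, $g$ the graph automorphisms — all of size $O(\log q)$ except $d$ which is bounded by the rank times something small — whereas the lower bound for $k(B_0(S))$ is polynomial in $q$ (at least linear, typically of degree equal to the number of $\Phi_e$-factors in $|S|$). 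So $k(B_0(S))/|\Out(S)|$ still grows polynomially, and comparing with $(p-1)^{1/4}$ — where $p-1 \leq |S|_p - 1$ is itself polynomially bounded in $q$ of degree at most the $\Phi_e$-multiplicity — one checks the inequality holds with room to spare, modulo a short list of small $(S,p)$ to be handled by hand.

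The main obstacle I anticipate is obtaining a clean uniform lower bound on the number of irreducible characters in the unipotent $e$-block $B_0(S(q))$ purely in terms of the $p$-part of $|S(q)|$, valid across all Lie types including the twisted ones and the awkward small-rank and small-$q$ exceptions, and in particular controlling the contribution of non-unipotent characters (Lusztig series of non-central semisimple $p'$-elements whose series meets $B_0$) which is what is needed to get the strict inequality and, after dividing by $\Out(S)$, the stronger $(p-1)^{1/4}$ bound. Reconciling the combinatorics of relative Weyl groups of $e$-split Levi subgroups with the crude size of the Sylow $p$-subgroup — essentially the discussion the first author thanks Malle for — is where the technical heart of the proof will be.
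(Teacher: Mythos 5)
Your outline captures the broad architecture of the paper — a CFSG case analysis in which the cyclic-Sylow case is dispatched by cyclic-defect block theory, and the Lie-type case is attacked via Broué--Malle--Michel, Cabanes--Enguehard, and counting both unipotent and Lusztig-series characters — but there are a few places where the route you sketch would not go through as written.

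First, in the cyclic-Sylow case your passage from $S$ to $G$ is not justified. You assert that Clifford theory over the $p'$-extension $G/S$ ``only increases or preserves'' $k(B_0)$, but a priori an orbit of $\Irr(B_0(S))$ under $G$ can collapse to fewer (indeed to a single) character of $B_0(G)$, so $k(B_0(G))$ need not dominate $k(B_0(S))$ — only the number of $G$-orbits on $\Irr(B_0(S))$. The paper avoids this entirely by invoking the inductive Alperin--McKay conditions for cyclic defect groups (Koshitani--Sp\"{a}th), reducing to $\bN_G(P)$ and then applying Mar\'oti's bound to the quotient $(\bN_G(P)/P')/\bO_{p'}(\cdot)$; this gives the result uniformly for all $G$ with cyclic-Sylow composition factors, not just almost simple $G$.

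Second, you misidentify the semisimple labels: characters of $B_0(\bG^F)$ lie in Lusztig series $\mathcal{E}(\bG^F,(t))$ with $t$ a $p$-element of the dual (not a $p'$-element). The real technical input the paper needs — and which your sketch does not articulate — is that when $p$ is good and $\bC_{\bG^*}(t)$ is connected, the semisimple character $\chi_{(t)}$ itself lies in $B_0(\bG^F)$ (the paper's Theorem 5.1, via Cabanes--Enguehard's compatibility of twisted induction with blocks). Without this one has no handle on which non-unipotent characters actually land in the principal block.

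Third, and most seriously, your argument for part (ii) — divide $k(B_0(S))$ by $|\Out(S)|$ and observe the latter is $O(\log q)$ while the former is ``polynomial in $q$'' — does not work. The count $k(B_0(S))$ is not controlled by $q$ but by $p^a = \Phi_e(q)_p$, which can remain as small as $p$ even as $q\to\infty$; meanwhile $f = \log_\ell q$ is unbounded. So the quotient $k(B_0(S))/|\Out(S)|$ can be driven to $0$, and the $(p-1)^{1/4}$ bound would fail. The paper's fix is its Lemma 6.3: via Euler's theorem, a field automorphism acting on semisimple characters $\chi_{(t)}$ associated to $p$-elements has orbit length at most $\min\{f, p^a - p^{a-1}\}$, so the orbit length is bounded in terms of $p$ rather than $q$. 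Combined with the near-$\Aut(S)$-invariance of unipotent characters (Malle/Lusztig: orbit length $\leq 3$), this yields the bound in Theorem 8.1 that makes part (ii) tractable. Identifying and proving this orbit-length control is the technical heart you flagged as the ``main obstacle,'' and it is indeed where the proposal, as written, falls short.
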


As we will explain in the next section, Theorem~\ref{main-theorem}
is a consequence of \cite{Maroti16} and the well-known Alperin-McKay
conjecture, which asserts that the number of irreducible characters
of height 0 in a block $B$ of a finite group $G$ coincides with the
number of irreducible characters of height 0 in the Brauer
correspondent of $B$ of the normalizer of a defect subgroup for $B$
in $G$. We take advantage of the recent advances on the conjecture
in the proof of our results, particularly the fact that Sp\"{a}th's
inductive Alperin-McKay conditions hold for all $p$-blocks with
cyclic defect groups \cite{Spath,KS16}. This explains why simple
groups with cyclic Sylow $p$-subgroups are excluded in
Theorem~\ref{main-thm-simple}(ii). Additionally, we take
advantage of recent results on the possible structure of defect
groups of principal blocks with few ordinary characters
\cite{KoshitanikB3,RSV21}, and this explains why the smaller values
of $p$ are excluded in Theorem~\ref{main-thm-simple}(ii).

Theorem \ref{main-thm-simple} turns out to be straightforward for
alternating groups or groups of Lie type in characteristic $p$, but
highly nontrivial for groups of Lie type in characteristic not equal
to $p$. We make use of Cabanes-Enguehard's results on unipotent
blocks \cite{Cabanes-book} to prove that the so-called semisimple
characters of $S$ all fall into the principal $p$-block $B_0(S)$ in
a certain nice situation. This and results of Brou\'{e}-Malle-Michel
\cite{Broue-Malle-Michel} and Cabanes-Enguehard
\cite{Cabanes-Enguehard94} on the compatibility between the
distributions of unipotent characters into unipotent blocks and
$e$-Harish-Chandra series allow us to obtain a general bound for the
number of $\Aut(S)$-orbits of characters in $\Irr(B_0(S))$ in terms
of certain data associated to $S$, for $S$  a simple group of
Lie type, see Theorem~\ref{theorem-bound}. We hope this result will
be useful in other purposes.

The next result classifies groups for which $k(B_0(G))$ is minimal
in the sense of Theorem~\ref{main-theorem}.

\begin{theorem}\label{theorem:equality}
Let $G$ be a finite group and $p$ a prime. Let $P$ be a Sylow
$p$-subgroup of $G$ and $B_0$ denote the principal $p$-block of $G$.
Then $k(B_0)=2\sqrt{p-1}$ if and only if $\sqrt{p-1}\in\NN$ and
$\bN_G(P)/\bO_{p'}(\bN_G(P))$ is isomorphic to the Frobenius group
$C_p\rtimes C_{\sqrt{p-1}}$.
\end{theorem}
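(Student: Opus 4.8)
The plan is to treat this as a rigidity/equality analysis built on top of Theorem~\ref{main-theorem}. First I would dispose of the ``if'' direction. Suppose $\sqrt{p-1}\in\NN$ and $H:=\bN_G(P)/\bO_{p'}(\bN_G(P))\cong C_p\rtimes C_{\sqrt{p-1}}$. By the Alperin--McKay machinery used to prove Theorem~\ref{main-theorem}, the number of height-zero characters in $B_0(G)$ equals the number of height-zero characters in $B_0(\bN_G(P))$, and since $P\cong C_p$ is cyclic here, \emph{all} characters in a block with cyclic defect group have height zero, so $k(B_0(G))=k(B_0(\bN_G(P)))=k(B_0(H))$. A direct computation in the Frobenius group $H=C_p\rtimes C_{\sqrt{p-1}}$ (its principal block contains the $\sqrt{p-1}$ linear characters of $C_{\sqrt{p-1}}$ together with $(p-1)/\sqrt{p-1}=\sqrt{p-1}$ characters of degree $\sqrt{p-1}$ induced from nontrivial characters of $C_p$) gives $k(B_0(H))=2\sqrt{p-1}$, as desired.

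For the harder ``only if'' direction, assume $k(B_0(G))=2\sqrt{p-1}$. The first step is to force the defect group to be cyclic: if $P$ were non-cyclic, one would want to invoke the refined inequalities behind Theorem~\ref{main-theorem} — in particular Theorem~\ref{main-thm-simple}(i), which gives the \emph{strict} inequality $k(B_0)>2\sqrt{p-1}$ whenever the relevant simple group lacks cyclic Sylow $p$-subgroups, together with the reduction (via Mar\'oti's counting and Clifford theory of principal blocks) that pushes the general case down to almost simple groups. So equality can only occur with $P$ cyclic. Once $P=C_p$ is cyclic, Alperin--McKay (known in the cyclic-defect case by \cite{Spath,KS16}) again gives $k(B_0(G))=k(B_0(\bN_G(P)))$, and replacing $G$ by $\bN_G(P)$ and then quotienting by $\bO_{p'}$ — which does not change the principal block's character count — we may assume $G=H$ is $p$-constrained with a normal $p'$-subgroup $K=\bO_{p'}(G)$ and $G/K\cong C_p\rtimes C$ for some cyclic $p'$-group $C$ acting faithfully on $P$ (faithfulness because $\bO_{p'}$ has been stripped; write $f=|C|$, so $f\mid p-1$).

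The final step is the arithmetic: I must show $k(B_0(H))=2\sqrt{p-1}$ forces $K=1$ and $f=\sqrt{p-1}$. For a $p$-constrained group with $\bO_{p'}(H)=K$, the principal block $B_0(H)$ is Morita/character-theoretically controlled by $\bN_H(P)$ and the action on $K$; using Brauer's theory of blocks with cyclic defect group one counts $k(B_0(H))=e+\frac{p-1}{e}$, where $e$ is the inertial index (the number of exceptional characters being $(p-1)/e$ and the number of non-exceptional ones being $e$). Here $e$ divides $f$ which divides $p-1$. Minimizing $e+(p-1)/e$ over divisors $e$ of $p-1$ gives exactly $2\sqrt{p-1}$, attained \emph{iff} $e=\sqrt{p-1}$, which in particular requires $\sqrt{p-1}\in\NN$. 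Then $e=\sqrt{p-1}\le f\le p-1$, and one must rule out $e<f$: this is where I would argue that if $C$ acts on $P$ with $|C|=f>e$ then the Brauer correspondent's inertial group forces $e=f$ once $K=1$, and that any nontrivial $K$ either increases $k(B_0)$ (more blocks/characters fuse in) or is incompatible with the inertial index staying equal to $\sqrt{p-1}$ — concretely, $\bN_H(P)/\bO_{p'}(\bN_H(P))$ must itself be $C_p\rtimes C_e$ with $e=\sqrt{p-1}$, and chasing back through the $\bO_{p'}$ quotient and the $\bN_G(P)$ reduction yields the stated isomorphism. The main obstacle is precisely this last rigidity step — showing that equality in $e+(p-1)/e\ge 2\sqrt{p-1}$ at the level of $\bN_G(P)/\bO_{p'}$ cannot be ``helped'' by a nontrivial normal $p'$-core or by a proper inertial subgroup, i.e., that the counting is genuinely tight only for the bare Frobenius group.
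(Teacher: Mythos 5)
Your proposal tracks the paper's argument closely. The ``if'' direction is the same (the paper simply declares it clear), and for the ``only if'' direction both you and the paper reduce to the cyclic-defect case using the machinery of Theorem~\ref{main-theorem}: your appeal to Theorem~\ref{main-thm-simple}(i) plus ``the reduction\ldots down to almost simple groups'' is exactly the minimal-counterexample argument of Section~\ref{sec:proof1.1}, which handles Theorems~\ref{main-theorem} and \ref{theorem:equality} simultaneously. The only real divergence is the final arithmetic: the paper, once Alperin--McKay is available, plugs $k\bigl((\bN_G(P)/P')/\bO_{p'}(\bN_G(P)/P')\bigr)=2\sqrt{p-1}$ directly into Mar\'oti's equality characterization, which outputs the Frobenius group $C_p\rtimes C_{\sqrt{p-1}}$ in one stroke and gives $P\cong C_p$ as a byproduct; you instead invoke Dade's cyclic-defect count $k(B_0)=e+(|P|-1)/e$. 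That is a valid, arguably more elementary, alternative, but note that you silently pass from ``$P$ cyclic'' to ``$P=C_p$''; you should add the one-line observation that $e+(p^n-1)/e\geq 2\sqrt{p^n-1}>2\sqrt{p-1}$ for $n\geq 2$ to close that step.

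Separately, the ``main obstacle'' you flag at the end is not actually an obstacle. Once you replace $G$ by $\bN_G(P)$ and pass to $\bar H:=\bN_G(P)/\bO_{p'}(\bN_G(P))$, the $p'$-core is trivial \emph{by construction}; and since $\bar P\cong C_p$ is a normal abelian Sylow $p$-subgroup with $\bO_{p'}(\bar H)=1$, one has $\bC_{\bar H}(\bar P)=\bar P$, hence $\bar H=\bar P\rtimes C_f$ with $C_f\leq\Aut(C_p)$ acting faithfully and $e=f$. There is nothing to ``rule out'': $k(B_0(\bar H))=k(\bar H)=f+(p-1)/f$ by Fong and a direct Clifford count, and the AM--GM equality case forces $f=\sqrt{p-1}$. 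So your approach is correct; the last paragraph just overestimates the difficulty of the final rigidity step.
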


We remark that, in the situation of Theorem~\ref{theorem:equality},
the number of $p'$-degree irreducible characters in $B_0(G)$ is also
equal to $2\sqrt{p-1}$. In general, if a $p$-block $B$ of a finite
group has an abelian defect group, then every ordinary irreducible
character of $B$ has height zero. This is the `if direction' of
Brauer's height-zero conjecture, which is now known to be true,
thanks to the work of Kessar and Malle \cite{Kessar-Malle}.
Theorem~\ref{main-theorem} therefore implies that if $P\in\Syl_p(G)$
is abelian and nontrivial then $k_0(B_0(G))\geq 2\sqrt{p-1}$, where
$k_0(B)$ denotes the number of height zero ordinary irreducible
characters of a block $B$.

Theorems~\ref{main-theorem} and \ref{theorem:equality} are useful in
the study of principal blocks with few height zero ordinary
irreducible characters. In fact, using them, we are able to show in
a forthcoming paper \cite{Hung-SF21} that $k_0(B_0(G))=3$ if and
only if $P\cong C_3$, and that $k_0(B_0(G))=4$ if and only if
$|P/P'|=4$ or $P\cong C_5$ and $\bN_G(P)/\bO_{p'}(\bN_G(P))$ is
isomorphic to the dihedral group $D_{10}$. These results have been
known only in the case $p\leq 3$, see \cite[Theorems~A and
C]{NavSamTie18}.

The paper is organized as follows. In Section~\ref{sec:first-obser},
we recall some known results on the Alperin-Mckay conjecture and
prove that our results follow when all the non-abelian composition
factors of $G$ have cyclic Sylow $p$-subgroups. We also prove
Theorem~\ref{main-thm-simple} for the sporadic simple groups and
groups of Lie type defined in characteristic $p$ in
Section~\ref{sec:first-obser}. The alternating groups are treated in
Section~\ref{sec:alt}. Section~\ref{sec:nonabelian} takes care of
the case when the Sylow $p$-subgroups of $S$ are non-abelian.
Sections~\ref{sec:linear-unitary}, \ref{sec:symp-orthog}, and
\ref{sec:proof-1.2-except} are devoted to proving
Theorem~\ref{main-thm-simple} for simple groups of Lie type defined in characteristics different from $p$. To do
so, in Section~\ref{sec:semi-chars}, we provide some background on
semisimple characters of finite reductive groups and prove that
those associated to conjugacy classes of $p$-elements belong to the
principal $p$-block in a certain situation, and in
Section~\ref{sec:gen-bound} we obtain a bound for the number of
$\Aut(S)$-orbits of characters in $\Irr(B_0(S))$. Finally, we finish
the proofs of Theorems~\ref{main-theorem} and \ref{theorem:equality}
in Section~\ref{sec:proof1.1}.

%%%%%%%%%%%%%%%%%%%%%%%%%%%%%%%%%%%%%%%%%%%%%%%%%%%%%%%%%%%%

\section{Some first observations}\label{sec:first-obser}

In this section we make some observations toward the proofs of the
main results.

\subsection{The Alperin-McKay conjecture}\label{sec:AM}
The well-known Alperin-McKay (AM) conjecture predicts that the
number of irreducible characters of height zero in a block $B$ of a
finite group $G$ coincides with the number of irreducible characters
of height zero in the Brauer correspondent of $B$ of the normalizer of
a defect subgroup of $B$ in $G$. For the principal blocks, the
conjecture is equivalent to
\[
k_{p'}(B_0(G))=k_{p'}(B_0(\bN_G(P))),
\]
where $P$ is a Sylow $p$-subgroup of $G$ and $k_{p'}(B_0(G))$
denotes the number of $p'$-degree irreducible ordinary characters in
$B_0(G)$.

On the other hand, if $p\mid |G|$, we have
\begin{align*}
k_{p'}(B_0(\bN_G(P)))&\geq k_{p'}(B_0(\bN_G(P)/P'))\\
&= k\left(B_0(\bN_G(P)/P')\right)\\
&=k\left(B_0\left((\bN_G(P)/P')/\pcore_{p'}(\bN_G(P)/P')\right)\right)\\
&= k\left((\bN_G(P)/P')/\pcore_{p'}(\bN_G(P)/P')\right)\\
&\geq 2\sqrt{p-1},
\end{align*}
where the first inequality follows from \cite[p. 137]{Navarro}, the
first equality follows from the fact that every irreducible ordinary
character of $\bN_G(P)/P'$ has $p'$-degree, the last two equalities
follow from \cite[Theorem 9.9]{Navarro} and Fong's theorem (see
\cite[Theorem 10.20]{Navarro}), and the last inequality follows from
\cite{Maroti16}. Therefore, if the AM conjecture holds for $G$ and
$p$, then the number of $p'$-degree irreducible ordinary characters
in $B_0(G)$ is bounded by $2\sqrt{p-1}$.

From this, we see that Theorems \ref{main-theorem} and
\ref{main-thm-simple}(i) hold if the AM conjecture holds for
$(G,p)$. We now prove that the same is true for
Theorem~\ref{theorem:equality}. Note that the ``if" implication of this
theorem is clear. Assume that the AM conjecture holds for $B_0(G)$
and $k(B_0(G))=2\sqrt{p-1}$ for some prime $p$ such that
$\sqrt{p-1}\in\NN$. Then, as seen above, we have
\[2\sqrt{p-1}=k(B_0(G))\geq
k((\bN_G(P)/P')/\pcore_{p'}(\bN_G(P)/P'))\geq2\sqrt{p-1},\] implying
\[
k((\bN_G(P)/P')/\pcore_{p'}(\bN_G(P)/P'))=2\sqrt{p-1},
\]
and thus $(\bN_G(P)/P')/\pcore_{p'}(\bN_G(P)/P')$ is isomorphic to
the Frobenius group $C_p\rtimes C_{\sqrt{p-1}}$, by \cite[Theorem
1]{Maroti16}. In particular, $P/P'\cong C_p$, implying that $P\cong
C_p$, and hence it follows that $\bN_G(P)/\bO_{p'}(\bN_G(P))$ is
isomorphic to the Frobenius group $C_p\rtimes C_{\sqrt{p-1}}$, as
wanted.

The AM conjecture is known to be true when $G$ has a cyclic Sylow
$p$-subgroup by Dade's theory \cite{Dade66}. In fact, by
\cite{Spath,KS16}, the so-called \emph{inductive Alperin-McKay
conditions} are satisfied for all blocks with cyclic defect groups.
Therefore, we have:

\begin{lemma}[Koshitani-Sp\"{a}th]\label{lemma-KS}
Let $p$ be a prime. Assume that all the non-abelian composition
factors of a finite group $G$ have cyclic Sylow $p$-subgroups. Then
the Alperin-Mckay conjecture holds for $G$ and $p$, and thus
Theorems~\ref{main-theorem}, \ref{main-thm-simple}(i), and
\ref{theorem:equality} hold for $G$ and $p$.
\end{lemma}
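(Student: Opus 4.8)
The plan is to reduce the Alperin–McKay conjecture for $(G,p)$ to the case of blocks with cyclic defect groups, where the inductive AM conditions are known by \cite{Spath,KS16}, and then invoke the block-theoretic reduction theorem of Späth. First I would recall that the inductive Alperin–McKay conditions of Späth, if satisfied by every non-abelian simple group involved in $G$ for the prime $p$, imply that the AM conjecture holds for every $p$-block of every finite group whose non-abelian composition factors are among those simple groups. So it suffices to check that each non-abelian composition factor $S$ of $G$ satisfies the inductive AM conditions at $p$. By hypothesis every such $S$ has cyclic Sylow $p$-subgroups, hence every $p$-block of $S$ — and more generally of any group with socle close to $S$ arising in the inductive setup — has cyclic defect groups. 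By the theorem of Koshitani and Späth \cite{KS16} (building on \cite{Spath}), the inductive AM conditions hold for all $p$-blocks with cyclic defect groups; in particular they hold for each such $S$ and $p$. Therefore the AM conjecture holds for $G$ and $p$.

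Once the AM conjecture is established for $(G,p)$, the remaining conclusions are immediate from the discussion preceding the lemma. Indeed, that discussion shows that the AM conjecture for $(G,p)$ forces
\[
k(B_0(G)) = k_{p'}(B_0(G)) \geq k\bigl((\bN_G(P)/P')/\pcore_{p'}(\bN_G(P)/P')\bigr) \geq 2\sqrt{p-1},
\]
giving Theorem~\ref{main-theorem} and Theorem~\ref{main-thm-simple}(i) for $G$ and $p$; and the argument given there for the equality case — that $k(B_0(G)) = 2\sqrt{p-1}$ combined with the AM conjecture forces the Frobenius structure on $\bN_G(P)/\pcore_{p'}(\bN_G(P))$ via \cite[Theorem~1]{Maroti16} — yields Theorem~\ref{theorem:equality} for $G$ and $p$.

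The main obstacle is purely one of citation bookkeeping rather than new mathematics: one must be careful that the inductive AM conditions are genuinely known for all the relevant simple groups with cyclic Sylow $p$-subgroups (this is exactly the content of \cite{Spath,KS16}, where the cyclic-defect case is handled uniformly), and that Späth's reduction theorem applies to transfer these local conditions to the arbitrary finite group $G$. Since \cite{KS16} settles the inductive AM conditions for all blocks of cyclic defect, and the reduction theorem is unconditional, no further work is needed. For the principal block one may also appeal more classically to Dade's theory for cyclic defect groups \cite{Dade66} to see the AM conjecture itself holds, but the inductive form from \cite{Spath,KS16} is what is recorded here since it is what the later sections will need.
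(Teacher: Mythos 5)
Your proof takes the same route as the paper's (very terse) justification: cite the Koshitani--Sp\"ath verification \cite{KS16} of the inductive Alperin--McKay conditions for blocks with cyclic defect groups, feed this into Sp\"ath's reduction theorem \cite{Spath} to deduce AM for $(G,p)$, and then invoke the discussion in Subsection~\ref{sec:AM} for the stated consequences. One small slip worth fixing: the displayed identity $k(B_0(G)) = k_{p'}(B_0(G))$ is not what AM or that discussion gives and is false in general (it fails whenever $B_0(G)$ contains a character of positive height); the correct chain is $k(B_0(G)) \geq k_{p'}(B_0(G)) = k_{p'}(B_0(\bN_G(P))) \geq k\bigl((\bN_G(P)/P')/\pcore_{p'}(\bN_G(P)/P')\bigr) \geq 2\sqrt{p-1}$, which yields the same lower bound and the same equality analysis.
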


Note that the linear groups $\PSL_2(q)$, the Suzuki groups $\ta
B_2(2^{2f+1})$ and the Ree groups $\ta G_2(3^{2f+1})$ all have
cyclic Sylow $p$-subgroups for $p$ different from the defining
characteristic of the group. So Theorem~\ref{main-thm-simple}
automatically follows from Lemma~\ref{lemma-KS} for these groups in
characteristic not equal to $p$.

\subsection{Small blocks} Blocks with a small number of ordinary characters have
been studied significantly in the literature. In particular, the
possible structure of defect groups of principal blocks with at most
5 ordinary irreducible characters are now known, see
\cite{Brandt82,Belonogov,KoshitanikB3,RSV21}. Using these results,
we can easily confirm our results for $p\leq 7$. For instance, to
prove Theorems~\ref{main-theorem} and \ref{main-thm-simple} for
$p=7$ it is enough to assume that $k(B_0(G))\leq 4$, but by going
through the list of possible defect groups of $B_0(G)$, we then have
$\Syl_p(G)\in\{1,C_2,C_3,C_2\times C_2,C_4,C_5\}$, which cannot
happen. To prove Theorem~\ref{theorem:equality} for $p< 7$ we
note that if $p=5$ and $k(B_0(G))=4$ then $P=C_5$; and if $p=2$ and
$k(B_0(G))=2$ then $P=C_2$, both of which cases $P$ is cyclic, and
thus the result of Subsection~\ref{sec:AM} applies.

Therefore we will assume from now on that $p\geq 11$, unless stated
otherwise.

\subsection{Sporadic and the Tits groups} We remark that Theorem
\ref{main-thm-simple} can be confirmed directly using
\cite{Atl1,atlas2} or \cite{GAP48} for sporadic simple groups and
the Tits group. Therefore, we are left with the alternating groups
and groups of Lie type, which will be treated in the subsequent
sections.

\subsection{Groups of Lie type in characteristic $p$}\label{sec:defining}
 Let $S$ be a
simple group of Lie type defined over the field of $q=p^f$ elements.
According to results of Dagger and Humphreys on defect groups of
finite reductive groups in defining characteristic, see
\cite[Proposition 1.18 and Theorem 3.3]{Cabanes18} for instance, $S$
has only two $p$-blocks.  Namely, the only non-principal block is a defect-zero block containing only the Steinberg character of $S$. Therefore,
\[
k(B_0(S))= k(S)-1.
\]

Let $\bG$ be a simple algebraic group of simply connected type and let $F$ be a Steinberg endomorphism on $\bG$ such that $S=X/\bZ(X)$, where $X=\bG^F$.  Assume that the rank of $\bG$ is $r$. By a result of
Steinberg (see \cite[Theorem 3.1]{Fulman-Guralnick12}), $X$ has at
least $q^r$ semisimple conjugacy classes, and thus $k(X)>q^r$. It
follows that
\[
k(B_0(S))\geq \left[ \frac{q^r}{|\bZ(X)|}-1\right],
\]
which yields $k(B_0(S))\geq q^r/|\bZ(X)|$. Using the values of
$|\bZ(X)|$ and $|\Out(S)|$ available in \cite[p. xvi]{Atl1}, it is
straightforward to check that $q^r/|\bZ(X)|\geq
2\sqrt{p-1}|\Out(S)|$, proving Theorem~\ref{main-thm-simple} for the
relevant $S$ and $p$.

%%%%%%%%%%%%%%%%%%%%%%%%%%%%%%%%%%%%%%%%%%%%%%%%%%%%%%%%%%%%

\section{Alternating groups}\label{sec:alt}

In this section we prove Theorem \ref{main-thm-simple} for the
alternating groups. The background on block theory of symmetric and
alternating groups can be found in \cite{Olsson93} for instance.

The ordinary irreducible characters of $\Sy_n$ are naturally labeled
by partitions of $n$. Two characters are in the same $p$-block if
and only if their corresponding partitions have the same $p$-cores,
which are obtained from the partitions by successive removals of rim
$p$-hooks until no $p$-hook is left. Therefore, $p$-blocks of
$\Sy_n$ are in one-to-one correspondence with $p$-cores of
partitions of $n$.

Let $B$ be a $p$-block of $\Sy_n$. The number $k(B)$ of ordinary
irreducible characters in $B$ turns out to depend only on $p$
and the so-called \emph{weight} of $B$, which is defined to be
$w(B):=(n-|\mu|)/p$, where $\mu$ is the $p$-core corresponding to
$B$ under the aforementioned correspondence. In fact,
\[
k(B)=k(p,w(B)):=\Sigma_{(w_0,w_1...,w_{p-1})}\pi(w_0)\pi(w_1)\cdots
\pi(w_{p-1}),
\]
where $(w_0,w_1...,w_{p-1})$ runs through all $p$-tuples of
non-negative integers such that $w(B)=\Sigma_{i=0}^{p-1} w_i$ and
$\pi(x)$ is the number of partitions of $x$, see \cite[Proposition
11.4]{Olsson93}. Note that $k(p,w(B))$ is precisely the number of
$p$-tuples of partitions of $w(B)$.

For the principal block $B_0(\Sy_n)$ of $\Sy_n$, we have
$w(B_0(\Sy_n))=[n/p]$, which is at least 1 by the assumption $p\mid
|S|$. It follows that
\[
k(B_0(\Sy_n))\geq k(p,1)=p\geq 2\sqrt{p-1}.
\]
Moreover, according to \cite[Proposition 2.8]{Olsson92}, when $p$ is
odd and $\widetilde{B}$ is a block of $\Al_n$ covered by $B$, then
$B$ and $\widetilde{B}$ have the same number of irreducible ordinary
characters (and indeed the same number of irreducible Brauer
characters as well). In particular, when p is odd, we have
$k(B_0(\Al_n))=k(B_0(\Sy_n))\geq 2\sqrt{p-1}$, which proves
Theorem~\ref{main-thm-simple}(i) for the alternating groups.

For part (ii) of Theorem \ref{main-thm-simple}, recall that $p\geq 11$, and thus $n\geq 11$ and $\Aut(S)=\Sy_n$. The
number of $\Sy_n$-orbits on $\Irr(B_0(\Al_n))$ is at least
$k(B_0(\Al_n))/2$, which in turn is at least
$1+(p-1)/2=(p+1)/2>2(p-1)^{1/4}$, and this proves
Theorem~\ref{main-thm-simple}(ii) for the alternating groups.

%%%%%%%%%%%%%%%%%%%%%%%%%%%%%%%%%%%%%%%%%%%%%%%%%%%%%%%%%%%%

\section{Groups of Lie type: the non-abelian $p$-Sylow case}\label{sec:nonabelian}

%As in Subsection \ref{sec:defining},
In this section, we let $\bG$ be a simple
algebraic group of adjoint type and $F$ a Steinberg endomorphism on
$\bG$ such that $S\cong [\GG,\GG]$ where $\GG:=\bG^F$. Let
$q=\ell^f$ with $\ell\neq p$ be the absolute value of all
eigenvalues of $F$ on the character group of an $F$-stable maximal
torus of $G$. Recall that we are assuming $p\geq 11$.

In this section we prove Theorem~\ref{main-thm-simple} for those $S$
of Lie type in characteristic different from $p$ such that the Sylow
$p$-subgroups of $\GG$ are non-abelian.

In that case, there are then more than one $d\in \NN$ such that
$p\mid \Phi_d(q)$ with $\Phi_d$ dividing the order polynomial of
$(\bG,F)$. Here, as usual, $\Phi_d$ denotes the $d$th cyclotomic
polynomial. (In fact, if there a unique such $d$, then a Sylow
$p$-subgroup of $\GG$ is contained in a Sylow $d$-torus of $\GG$,
and hence is abelian, see \cite[Theorem 25.14]{malletesterman}.)

Let $e_p(q)$ denote the multiplicative order of $q$ modulo $p$. Note
that, by \cite[Lemma 25.13]{malletesterman}, $p\mid \Phi_d(q)$ if
and only if $d=e_p(q)p^i$ for some $i\geq 0$. Therefore, as there is
more than one $d\in \NN$ such that $p\mid \Phi_d(q)$, we must have
$p\mid d$ for some $d\in \NN$ such that $\Phi_d$ divides the order
polynomial of $(\bG,F)$. The fact that $p\geq 11$ then rules out the
cases when $\bG$ is of exceptional type and thus we are left with
only the classical types. That is, $\GG=\PGL_n(q)$, $\PGU_n(q)$,
$\SO_{2n+1}(q)$, $\PCSp_{2n}(q)$, or
$\mathrm{P}(\mathrm{CO}^{\pm}_{2n}(q))^0$.

For $\GG=\PGL_n(q)$ or $\PGU_n(q)$, we define $e$ to be the smallest
positive integer such that $p\mid (q^e-(\epsilon)^e)$ ($\epsilon=1$
for linear groups and $\epsilon=-1$ for unitary groups), so that
$e=e_p(q)$ when $\GG=\PGL_n(q)$ or $\GG=\PGU_n(q)$ and $4\mid
e_p(q)$, $e=e_p(q)/2$ when $\GG=\PGU_n(q)$ and $2\mid e_p(q)$ but
$4\nmid e_p(q)$, and $e=2e_p(q)$ when $\GG=\PGU_n(q)$ and $2\nmid
e_p(q)$. For $\GG=\SO_{2n+1}(q)$, $\PCSp_{2n}(q)$, or
$\mathrm{P}(\mathrm{CO}^{\pm}_{2n}(q))^0$, we define $e$ to be the
smallest positive integer such that $p\mid (q^e\pm 1)$, so that
$e=e_p(q)$ when $e_p(q)$ is odd and $e=e_p(q)/2$ when $e_p(q)$ is
even.

Let $n=we+m$ where $0\leq m< e$. We claim that $p\leq w$. To see
this, first assume that $\GG=\PGL_n(q)$. Then, as mentioned above,
$ep\leq n$, which implies that $ep<(w+1)e$, and thus $p\leq w$.
Next, assume that $\GG=\SO_{2n+1}(q)$, $\PCSp_{2n}(q)$, or
$\mathrm{P}(\mathrm{CO}^{\pm}_{2n}(q))^0$. If $e=e_p(q)$ is odd,
then since $p\mid (q^e-1)$ and $\gcd(q^e-1,q^i+1)\leq 2$ for every
$i\in\NN$, we have $p\mid (q^j-1)$ for some $e<j\leq n$, and it
follows that $ep\leq n$, implying $p\leq w$. On the other hand, if
$2e=e_p(q)$ is even then $2ep=e_p(q)p\leq 2n<2(w+1)e$, which also
implies that $p\leq w$. Finally, assume $\GG=\PGU_n(q)$. The case
$4\mid e_p(q)$ is argued as in the case $S=\PGL_n(q)$; the case
$2\mid e_p(q)$ but $4\nmid e_p(q)$ is argued as in the case
$S=\SO_{2n+1}(q)$ and $2\mid e_p(q)$. For the last case $2\nmid
e_p(q)$, we have $ep/2=e_p(q)p$, and in order for $\Phi_{e_p(q)p}$
dividing the generic order of $|\PGU_n(q)|$, $e_p(q)p\leq n/2$, and
hence it follows that $ep\leq n$, which also implies that $p\leq w$.
The claim is fully proved.

By \cite[Theorem 3.2]{Broue-Malle-Michel} and \cite[Main
Theorem]{Cabanes-Enguehard94}, the number of unipotent characters of
$\GG$ in the principal block $B_0(\GG)$ is equal to $k(W_e)$ - the
number of irreducible complex characters of the relative Weyl group
$W_e$ of a Sylow $e_p(q)$-torus of $\GG$. This $W_e$ is the wreath
product $C_e\wr \Sy_w$ when $\bG$ is of type $A$ and is a subgroup
of index 1 or 2 of $C_{2e}\wr \Sy_w$ when $\bG$ is of type $B$, $C$,
or $D$, see \cite[Proposition 5.5 and its proof]{Malle-Maroti}. In
any case we have that the number of unipotent characters in
$\Irr(B_0(\GG))$ is at least $k(\Sy_w)/2=\pi(w)/2$, which in turns
is at least $\pi(p)/2$ as $p\leq w$. Since every unipotent character
of $\GG$ restricts irreducibly to $S$, it follows that the number of
unipotent characters in $\Irr(B_0(S))$ is at least $\pi(p)/2$.

By a result of Lusztig (see \cite[Theorem 2.5]{Malle08}), every
unipotent character of a simple group of Lie type lies in a
$\Aut(S)$-orbit of length at most 3. (In fact, every $\Aut(S)$-orbit
on unipotent characters of $S$ has length 1 or 2, except when
$S=P\Omega_8^+(q)$ whose the graph automorphism of order 3 produces
two orbits of length 3.) Therefore, together with the conclusion of
the previous paragraph, we deduce that the number of
$\Aut(S)$-orbits on $\Irr(B_0(S))$ is at least $\pi(p)/6$. This
bound is greater than $2\sqrt{p-1}$ when $p\geq 11$, as required.

\section{Semisimple characters and principal
blocks}\label{sec:semi-chars}

Before continuing with our proof of Theorem \ref{main-thm-simple}
for groups of Lie type, we recall some background on certain
characters known as \emph{semisimple characters} and show how they
fall into the principal block in a certain situation. Background on
character theory of finite reductive groups can be found in
\cite{Carter85,Cabanes-book,Digne-Michel91}. Let $\bG$ be a
connected reductive group defined over $\FF_q$ and $F$ an associated
Frobenius endomorphism on $\bG$. Let $\bG^\ast$ be an algebraic
group with a Frobenius endomorphism which, for simplicity, we denote
by the same $F$, such that $(\bG,F)$ is in duality to
$(\bG^\ast,F)$.

Let $t$ be a semisimple element of $(\bG^\ast)^F$. The rational
Lusztig series $\mathcal{E}(\bG^F,(t))$ associated to the
$(\bG^\ast)^F$-conjugacy class $(t)$ of $t$ is defined to be the set
of irreducible characters of $\bG^F$ occurring in some
Deligne-Lusztig character $R_{\bT}^{\bG}\theta$, where $\bT$ is an
$F$-stable maximal torus of $\bG$ and $\theta\in\Irr(\bT^F)$ such
that $(\bT,\theta)$ corresponds in duality to a pair $(\bT^\ast,s)$
with $s\in\bT^\ast \cap (t)$. Here we recall that there is a
one-to-one duality correspondence between $\bG^F$-conjugacy classes
of pairs $(\bT,\theta)$, where $\bT$ is an $F$-stable maximal torus
of $\bG$ and $\theta\in\Irr(\bT^F)$, and the
$(\bG^\ast)^F$-conjugacy classes of pairs $(\bT^\ast,s)$, where
$\bT^\ast$ is dual to $\bT$ and $s\in(\bT^\ast)^F$.

We continue to let $t$ be a semisimple element of $(\bG^\ast)^F$ and
assume furthermore that $\bC_{\bG^\ast}(t)$ is a Levi subgroup of
$\bG^\ast$. Let $\bG(t)$ be a Levi subgroup of $\bG$ in duality with
$\bC_{\bG^\ast}(t)$ and $\mathbf{P}$ be a parabolic subgroup of
$\bG$ for which $\bG(t)$ is the Levi complement. The twisted
induction $R_{\bG(t)\subseteq P}^\bG$ and the multiplication by
$\widehat{t}$, a certain linear character of $\Irr(\bG(t)^F)$
naturally defined by $t$ (see \cite[(8.19)]{Cabanes-book}), then
induce a bijection between the Lusztig series
$\mathcal{E}(\bG(t)^F,1)$ and $\mathcal{E}(\bG^F,(t))$, see
\cite[Proposition 8.26 and Theorem 8.27]{Cabanes-book} or
\cite[Theorem 13.25 and Proposition 13.30]{Digne-Michel91}. In fact,
for each $\lambda\in \mathcal{E}(\bG(t)^F,1)$, one has
\[\varepsilon_{\bG}\varepsilon_{\bG(t)}R_{\bG(t)\subseteq
P}^\bG(\widehat{t}\lambda)\in \mathcal{E}(\bG^F,(t)),\] where
$\varepsilon_{\bG}:=(-1)^{\sigma(\bG)}$ with $\sigma(\bG)$ the
$\mathbb{F}_q$-rank of $\bG$. Taking $\lambda$ to be trivial, we
have the character
\[\chi_{(t)}:=\varepsilon_{\bG}\varepsilon_{\bG(t)}R_{\bG(t)\subseteq
P}^\bG(\widehat{t}\mathbf{1}_{\bG(t)^F})\in\mathcal{E}(\bG^F,(t)),\]
which is often referred to as a semisimple character of $\bG^F$, of
degree
\[\chi_{(t)}(1)=|(\bG^\ast)^F:\bC_{{\bG^\ast}^F}(t)|_{\ell'},\] where
$\ell$ is the defining characteristic of $\bG$, see \cite[Theorem
13.23]{Digne-Michel91}.

By \cite[Theorem 9.12]{Cabanes-book}, every element of $B_0(\bG^F)$
lies in a Lusztig series $\mathcal{E}(\bG^F, (t))$ where $t$ is a
$p$-element of ${\bG^\ast}^F$. Hence one might ask which such $t$
indeed produce semisimple characters that contribute to the
principal block. We will see in the following theorem that in a
certain nice situation which is indeed enough for our purpose, the
centralizer $\bC_{\bG^\ast}(t)$ is a Levi subgroup of $\bG^\ast$,
and thus the semisimple character $\chi_{(t)}$ associated to $(t)$
is well-defined and belongs to $B_0(\bG^F)$.

In the following, we recall that a prime $p$ is good for $\bG$ if it
does not divide the coefficients of the highest root of the root
system associated to $\bG$.

\begin{theorem}\label{lemma-red-gp} Let $(\bG,F)$ be a connected reductive group defined
over $\FF_q$. Let $p$ be a good prime for $\bG$ and not dividing
$q$. Let $t$ be a $p$-element of ${\bG^\ast}^F$. If
$\bC_{\bG^\ast}(t)$ is connected, then the semisimple character
$\chi_{(t)}\in\Irr(\bG^F)$ belongs to the principal $p$-block of
$\bG^F$. In particular, if $\bZ(\bG)$ is connected, the character
$\chi_{(t)}$ belongs to the principal block of $\bG^F$ for every
$p$-element $t\in {\bG^\ast}^F$.
\end{theorem}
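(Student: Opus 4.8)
The plan is to use the compatibility between Jordan decomposition and block theory, as developed by Broué--Michel and Cabanes--Enguehard, to pin down the block of $\chi_{(t)}$ via the block of the trivial character in the centralizer. First I would set $\bL^\ast := \bC_{\bG^\ast}(t)$, which by the connectedness hypothesis is a connected reductive subgroup of $\bG^\ast$ containing a maximal torus, hence (since $p\nmid q$ and $t$ is semisimple) is in fact a Levi subgroup of $\bG^\ast$ — this is where the good-prime hypothesis enters, guaranteeing $\bC_{\bG^\ast}(t)$ is a Levi and not merely connected reductive; more precisely, for $p$ good and $t$ a $p$-element, $\bC_{\bG^\ast}(t)^\circ$ is always a Levi, and connectedness makes it equal to the full centralizer. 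Let $\bG(t) := \bL$ be a Levi of $\bG$ dual to $\bL^\ast$, with $\mathbf P$ an ambient parabolic. Then the machinery recalled just above the statement gives that $\chi_{(t)} = \varepsilon_\bG\varepsilon_{\bG(t)} R^\bG_{\bG(t)\subseteq P}(\widehat t\,\mathbf 1_{\bG(t)^F})$, and under the Jordan-decomposition bijection $\mathcal E(\bG(t)^F,1)\to\mathcal E(\bG^F,(t))$ the trivial character $\mathbf 1_{\bG(t)^F}\in\mathcal E(\bG(t)^F,1)$ corresponds to $\chi_{(t)}$.

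The key step is then to invoke the block-theoretic form of this correspondence. By \cite[Theorem 9.12]{Cabanes-book} and the surrounding results of Cabanes--Enguehard on $e$-Harish-Chandra theory and unipotent blocks, the map $\psi\mapsto \varepsilon_\bG\varepsilon_{\bL} R^\bG_{\bL\subseteq P}(\widehat t\,\psi)$ sends characters in a given block of $\bG(t)^F$ lying in $\mathcal E(\bG(t)^F,1)$ into a single block of $\bG^F$, and in particular sends the \emph{principal} block of $\bG(t)^F$ (which contains $\mathbf 1_{\bG(t)^F}$) into the principal block of $\bG^F$. Concretely, $\mathbf 1_{\bG(t)^F}$ lies in $B_0(\bG(t)^F)$, so $\chi_{(t)}$ lies in whatever block of $\bG^F$ the twisted induction maps $B_0(\bG(t)^F)$ to; since twisted induction from a Levi is compatible with the Brauer correspondence on principal blocks (a Sylow $p$-subgroup of $\bL^F$ is a Sylow $p$-subgroup of $\bG^F$ because $\bL^\ast$ contains $\bC_{\bG^\ast}(t)$ — note $t$ itself, being a $p$-element, lies in the center of $\bL^\ast$, forcing no $p$-part to be lost), that block is $B_0(\bG^F)$. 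I would cite \cite[Theorem 9.12]{Cabanes-book} together with the statement on compatibility of Lusztig series with principal blocks for the precise formulation.

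For the final sentence: if $\bZ(\bG)$ is connected, then $\bG^\ast$ has connected center-complement structure making all centralizers of semisimple elements of $\bG^\ast$ connected (this is the standard fact that $\bZ(\bG)$ connected $\Leftrightarrow$ $\bC_{\bG^\ast}(s)$ connected for all semisimple $s\in\bG^\ast$, via the derived/dual correspondence). Hence for every $p$-element $t\in(\bG^\ast)^F$ the hypothesis of the first part is automatic, and $\chi_{(t)}\in B_0(\bG^F)$.

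The main obstacle I anticipate is making the block-compatibility of twisted induction precise enough: the literature (Broué--Michel, Hiss, Cabanes--Enguehard) handles this cleanly for $e$-split Levi subgroups, but here $\bL=\bG(t)$ is a Levi dual to a centralizer and one must check it is of the right shape — in the situations needed for Theorem~\ref{main-thm-simple} the torus $\bZ(\bL)^\circ$ will contain a Sylow $e$-torus, so $\bL$ is $e$-split and the Cabanes--Enguehard results apply directly, but in the generality of the present statement one should instead argue via \cite[Theorem 9.12]{Cabanes-book} directly, identifying the unique block of $\bG^F$ containing $\mathcal E(\bG^F,(t))\cap B_0(\bG^F)$ and checking $\chi_{(t)}$ (as the Jordan-image of the \emph{unipotent} block's trivial character) lands there. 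The point to verify carefully is that the Jordan-decomposition bijection restricted to the principal unipotent block of $\bG(t)^F$ has image inside $B_0(\bG^F)$ rather than some other block in $\mathcal E(\bG^F,(t))$; this follows because $\widehat t$ is a linear character of $\bG(t)^F$ of $p$-power order and hence trivial on a Sylow $p$-subgroup, so twisting by it does not change the block, reducing the claim to the purely unipotent compatibility statement, which is \cite[Theorem 9.12]{Cabanes-book}.
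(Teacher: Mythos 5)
Your overall strategy --- reduce via Jordan decomposition to the unipotent side, and argue that twisting by $\widehat t$ does not change the block --- is the right intuition, and your treatment of the last sentence (connected center $\Rightarrow$ all centralizers of semisimple elements in $\bG^\ast$ connected) matches the paper. However, the central step contains a genuine gap. The question is whether the irreducible constituents of $R^{\bG}_{\bG(t)\subseteq\mathbf P}(\mathbf 1_{\bG(t)^F})$ lie in the same $p$-block of $\bG^F$ as $\chi_{(t)}=\pm R^{\bG}_{\bG(t)\subseteq\mathbf P}(\widehat t\,\mathbf 1_{\bG(t)^F})$. Your argument for this is: $\widehat t$ has $p$-power order, so multiplying by it inside $\bG(t)^F$ does not change the block, and then ``the purely unipotent compatibility statement'' does the rest. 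But the observation that $\mathbf 1_{\bG(t)^F}$ and $\widehat t\,\mathbf 1_{\bG(t)^F}$ lie in the same block of $\bG(t)^F$ does \emph{not} by itself imply that $R^{\bG}_{\bG(t)}(\mathbf 1)$ and $R^{\bG}_{\bG(t)}(\widehat t)$ have constituents in the same block of $\bG^F$; establishing exactly this block-compatibility of twisted induction across Lusztig series of $p$-elements is the content of \cite[Theorem 21.13]{Cabanes-book}, which is the reference the paper actually invokes. You cite \cite[Theorem 9.12]{Cabanes-book} for this, but Theorem~9.12 only says that $\Irr(B)$ is a union of $\mathcal E(\bG^F,(tu))$ with $u$ running over $p'$-elements of $\bC_{\bG^\ast}(t)^F$; it gives no information about which block the constituents of $R^{\bG}_{\bG(t)}(\mathbf 1)$ fall into.

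Two smaller but substantive errors also appear. First, your claim that $\bL^F$ contains a Sylow $p$-subgroup of $\bG^F$ ``because $t$ lies in the center of $\bL^\ast$'' is false in general: a centralizer of a $p$-element need not contain a Sylow $p$-subgroup (e.g.\ $\GL_p(q)$ with $p\mid q-1$, $t=\diag(\zeta,1,\ldots,1)$: the Sylow $p$-subgroup is $C_{p^a}\wr C_p$, while $\bC(t)\cong\GL_1\times\GL_{p-1}$ has abelian Sylow of smaller order). Second, a linear character of $p$-power order is trivial on $p$-regular elements, \emph{not} on a Sylow $p$-subgroup --- indeed its restriction to a Sylow $p$-subgroup is faithful on the quotient to the $p$-part of the abelianization. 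The conclusion that multiplying a character by a linear character of $p$-power order preserves blocks is true (block membership is detected on $p$-regular classes), but your justification has it exactly backwards. Once these are repaired, what remains is precisely the statement of \cite[Theorem 21.13]{Cabanes-book}: the constituents of $R^{\bG}_{\bG(t)\subseteq\mathbf P}(\mathbf 1_{\bG(t)^F})$ are unipotent characters of $\bG^F$ lying in the same $p$-block as $\chi_{(t)}$; since the trivial character of $\bG^F$ is one of these constituents, $\chi_{(t)}\in B_0(\bG^F)$. This is the short argument the paper gives, and it bypasses entirely the Sylow and $e$-split considerations you raise.
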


\begin{proof}
Assume that $\bC_{\bG^\ast}(t)$ is connected. Since $p$ is good for
$\bG$, $\bC_{\bG^\ast}(t)$ is then a Levi subgroup of $\bG^\ast$,
see \cite[Proposition 13.16]{Cabanes-book}. Define a Levi subgroup
$\bG(t)$ and parabolic subgroup $\mathbf{P}$ of $\bG$ as above.
Since
$\chi_{(t)}=\varepsilon_{\bG}\varepsilon_{\bG(t)}R_{\bG(t)\subseteq
\mathbf{P}}^\bG\left(\widehat{t}\mathbf{1}_{\bG(t)^F}\right)$,
\cite[Theorem 21.13]{Cabanes-book} implies that all the irreducible
constituents of $R_{\bG(t)\subseteq
\mathbf{P}}^\bG(\mathbf{1}_{\bG(t)^F})$ are unipotent characters of
$\bG^F$ which are in the same $p$-block as $\chi_{(t)}$. As the
trivial character is obviously a constituent of $R_{\bG(t)\subseteq
\mathbf{P}}^\bG(\mathbf{1}_{\bG(t)^F})$, we deduce that
$\chi_{(t)}\in B_0(\bG^F)$.

The second statement of the theorem immediately follows from the
first, as if $\bZ(\bG)$ is connected then the centralizer of every
semisimple element of $\bG^\ast$ is connected, see \cite[Lemma
13.14]{Digne-Michel91}.
\end{proof}

%%%%%%%%%%%%%%%%%%%%%%%%%%%%%%%%%%%%%%%%%%%%%%%%%%%%%%%%%%%%

\section{Linear and Unitary Groups}\label{sec:linear-unitary}

In this section, we let $S=\PSL_n^\epsilon(q)$, where $p\nmid q$ and
$\epsilon\in\{\pm1\}$.  Here  $\PSL_n^\epsilon(q):=\PSL_n(q)$ in the
case $\epsilon=1$ and $\PSU_n(q)$ in the case $\epsilon=-1$, and
analogous for $\SL_n^\epsilon(q)$, $\GL_n^\epsilon(q)$, and
$\PGL_n^\epsilon(q)$.  We further let $\overline{q}:=q$ if
$\epsilon=1$ and $\overline{q}:=q^2$ if $\epsilon=-1$. Note that
with our notation, $\SL^\epsilon_n(q)$ and $\GL_n^\epsilon(q)$ are
naturally subgroups of $\SL_n(\overline{q})$ and
$\GL_n(\overline{q})$, respectively.

\begin{proposition}\label{prop:linear}
Let $S=\PSL^\epsilon_n(q)$ and let $p\nmid q$ be a prime. Then Theorem \ref{main-thm-simple} holds for any almost simple group $A$ with socle $S$ and $p\nmid |A/S|$.
\end{proposition}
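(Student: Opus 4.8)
The plan is to reduce Theorem~\ref{main-thm-simple} for $S=\PSL_n^\epsilon(q)$ to a counting statement about $\Irr(B_0(S))$ that can be established by exhibiting enough explicitly-constructed characters in the principal block. By Lemma~\ref{lemma-KS} and the remarks following it, we may assume that the Sylow $p$-subgroups of $S$ are non-abelian — indeed if they are cyclic we are done by Koshitani--Sp\"{a}th, and if they are abelian but non-cyclic the situation (more than one relevant $d$) is already covered; but actually the non-abelian case was handled in Section~\ref{sec:nonabelian} using unipotent characters, so the genuinely new work here is when there is a \emph{unique} $d\in\NN$ with $p\mid\Phi_d(q)$ dividing the order polynomial, i.e. when the Sylow $p$-subgroup sits inside a Sylow $d$-torus. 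In that regime the unipotent characters alone are too few, and we must bring in the semisimple characters supplied by Section~\ref{sec:semi-chars}.

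The key steps, in order: (1) Set up the standard picture with $\bG$ of type $A_{n-1}$; pass to the group $\GL_n^\epsilon(q)$ (or $\SL_n^\epsilon(q)$) where $\bZ(\bG)$ is connected (for $\GL$) so that Theorem~\ref{lemma-red-gp} applies unconditionally: every semisimple character $\chi_{(t)}$ attached to a $p$-element $t$ of the dual lies in $B_0$. (2) Count the $(\bG^\ast)^F$-classes of $p$-elements $t$; since $p$ is odd and (being in the unique-$d$ case) a Sylow $p$-subgroup of $\GL_n^\epsilon(q)$ is a wreath-type group of order controlled by $e=e_p(q)$ and $w=\lfloor n/e\rfloor$, there are on the order of $p^{\lfloor w\rfloor}$ (or at least $p$, using $w\ge 1$) such classes, giving that many distinct semisimple characters of $\GL_n^\epsilon(q)$ in $B_0$, of pairwise distinct degrees $|(\bG^\ast)^F:\bC(t)|_{\ell'}$. (3) Descend from $\GL_n^\epsilon(q)$ to $S=\PSL_n^\epsilon(q)$: control the restriction of these characters to $\SL_n^\epsilon(q)$ and their further passage to the quotient by the center, using Clifford theory and the fact that the relevant characters are semisimple (so their restrictions are governed by the component group $\bC_{\bG^\ast}(t)/\bC_{\bG^\ast}(t)^\circ$, trivial here), to conclude that they remain in $B_0(S)$ and that one still has $\gg\sqrt{p-1}$ of them, giving part~(i) with strict inequality. (4) For part~(ii), combine these semisimple characters with the unipotent characters already in $B_0(S)$, and estimate the number of $\Aut(S)$-orbits: $\Aut(S)$ acts through diagonal, field, and graph automorphisms, each contributing only bounded orbit sizes on the relevant families (field automorphisms permute the $t$'s via $q\mapsto q^{p^a}$ but fix the principal block, the diagonal-graph part has order dividing $2\gcd(n,q-\epsilon)$ times $2$), so the $\gtrsim p$ characters collapse to $\gtrsim p^{1-o(1)}$ orbits, comfortably beating $2(p-1)^{1/4}$ for $p\ge 11$.

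The main obstacle I expect is Step~(3), the descent to the simple group: one must verify that sufficiently many of the constructed semisimple characters of $\GL_n^\epsilon(q)$ do not fuse to the same character of $\PSL_n^\epsilon(q)$ and do stay in the principal block after restriction and deflation. The subtlety is that $\bZ(\bG)$ connected holds for $\GL$ but not for $\SL$, so Theorem~\ref{lemma-red-gp} in the $\SL$ setting requires checking that $\bC_{\bG^\ast}(t)$ is connected — for type $A$ and $t$ a $p$-element with $p\nmid(q-\epsilon)$ this should hold because $p\nmid n$ forces the relevant component group to be a $p'$-group, but the boundary cases $p\mid n$ or $p\mid\gcd(n,q-\epsilon)$ need separate care. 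A secondary obstacle is getting the count in Step~(2) sharp enough: one needs not merely \emph{some} semisimple characters but enough with \emph{distinct} degrees (or otherwise provably distinct) to guarantee they are distinct irreducible constituents of $B_0(S)$; tracking the $\ell'$-part of the index $|(\bG^\ast)^F:\bC_{\bG^\ast}(t)^F|$ as $t$ ranges over $p$-element classes, and showing these indices separate the classes up to the small ambiguity introduced by passing to $S$, is the technical heart. I would handle the various small-rank and $p\mid n$ exceptions at the end, possibly invoking the reductions to $p\ge 11$ and the ATLAS/GAP checks already licensed in Section~\ref{sec:first-obser}.
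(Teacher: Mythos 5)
Your high-level strategy matches the paper's: use the Jordan decomposition/Lusztig series picture for $\wt{G}=\GL_n^\epsilon(q)$, exhibit many semisimple characters attached to $p$-elements $t$ of the dual that land in $B_0(\wt{G})$, descend to $S$, and bound the size of $\Aut(S)$-orbits through the action on classes $(t)$. However, there are two concrete gaps.

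First, your step~(2) asserts that the semisimple characters $\chi_{(t)}$ for distinct classes $(t)$ can be separated by comparing their degrees $|(\bG^\ast)^F:\bC_{\bG^\ast}(t)^F|_{\ell'}$. This is both false in general (distinct $p$-element classes can have $\GG^\ast$-conjugate centralizers, hence identical degrees) and unnecessary: distinct classes $(t)$ determine disjoint Lusztig series $\mathcal{E}(\wt{G},(t))$, so the associated semisimple characters are automatically distinct. The paper never needs the degree map to be injective; it simply uses Lusztig-series disjointness, and more precisely parametrizes $\Irr(B_0(\wt{G}))$ via Fong--Srinivasan's $\chi_{t,\psi}$ labels together with Olsson's exact count $k(B_0(\wt{G}))=k\bigl(e'+\frac{p^a-1}{e'},w\bigr)$. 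Replacing your degree argument by this is not cosmetic; it is where the precise lower bound comes from.

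Second, and more seriously, your step~(3) reduction from $\wt{G}$ to $S$ glosses over the case $e'=1$, i.e.\ $p\mid q-\epsilon$. You note in passing that $p\mid\gcd(n,q-\epsilon)$ "needs separate care," but you have the wrong boundary: the abelian-Sylow reduction already forces $p\nmid n$ (via \cite[Lemma 25.13]{malletesterman}), so $p\nmid|\bZ(G)|$ is automatic and the descent $G\to S$ is free by \cite[Theorem 9.9]{Navarro}. What is \emph{not} automatic is the descent $\wt{G}\to\GG$: when $e'=1$ one has $p\mid q-\epsilon=|\bZ(\wt{G})|$, so the identification $\Irr(B_0(\GG))\cong\Irr(B_0(\wt{G}))$ via inflation breaks down, and one also cannot use the orbit bound from the $\wt{G}$-picture directly. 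The paper handles $e'=1$ by a genuinely separate construction: one picks $p$-elements $t\in\wt{G}$ with eigenvalue multiset $\{\zeta,\xi,(\zeta\xi)^{-1},1,\dots,1\}$, checks $t\in[\wt{G},\wt{G}]=G$ and that $t$ is not $\wt{G}$-conjugate to $tz$ for $1\neq z\in\bZ(\wt{G})$ (here $p>n$ is used), and then invokes \cite[Lemma 1.4]{RSV21} and \cite[Proposition 2.6]{SFT21} to see that the members of $\mathcal{E}(\wt{G},t)$ restrict irreducibly to $G$. Without something of this kind your argument does not close when $p\mid q-\epsilon$.

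A minor further comment: your orbit estimate in step~(4), that field automorphisms send $t$ to a power of $t$ and hence the orbit of $(t)$ has length at most $\frac{p^a-1}{e'}$, is in the right spirit and matches the paper's Lemma~\ref{lemma-field-auto}-type bound; but you should phrase the action as raising eigenvalues to $q_0$-powers (the defining-characteristic Frobenius), not as "$q\mapsto q^{p^a}$," and you need the Cabanes--Sp\"ath equivariant Jordan decomposition \cite[Theorem 3.1]{CS13} to transfer control from classes $(t)$ to characters $\chi_{t,\psi}$.
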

\begin{proof}

With the results of the previous sections, we may assume $n\geq 3$,
$p\geq 11$, and $q$ is a power of some prime different from $p$.

Write $S=\PSL^\epsilon_n(q)$, $\GG=\PGL^\epsilon_n(q)$,
$\wt{G}=\GL^\epsilon_n(q)$, and $G=\SL^\epsilon_n(q)$.  Then we have
$G=[\wt{G}, \wt{G}]$, $S=G/\bZ(G)$, and $\GG=\wt{G}/\bZ(\wt{G})$.
From Section \ref{sec:nonabelian}, we may assume that Sylow
$p$-subgroups of $\GG$ are abelian, which implies that there is a
unique $e$ such that $p\mid \Phi_e({q})$ and $\Phi_e$ divides the
generic order polynomial of $\GG$. Here $e$ must be $e_p({q})$, the
multiplicative order of ${q}$ modulo $p$.  Note that this also
forces $p\nmid n$ by again appealing to \cite[Lemma
25.13]{malletesterman}.

We will further define $\overline{e}:=e_p(\overline{q})$ and $e'$ as
follows:
\[e':=\left\{\begin{array}{cc}
\overline{e} & \hbox{ if $\epsilon=1$ or if $\epsilon=-1$ and $p\mid q^{\overline{e}}-(-1)^{\overline{e}}$}\\
2\overline{e} & \hbox{ if $\epsilon=-1$ and $p\mid q^{\overline{e}}+(-1)^{\overline{e}}$}.\\
\end{array}\right.\]

To prove Theorem \ref{main-thm-simple}, our aim is to show that when
a Sylow $p$-subgroup of $S$ is not cyclic, then the number of
$\Aut(S)$-orbits on $\Irr(B_0(S))$ is larger than $2\sqrt{p-1}$.

Note that since $p\nmid \gcd(n,q-\epsilon)=|\bZ(G)|$, the
irreducible characters in the principal block of $S$ are the same as
that of $G$, under inflation (see \cite[Theorem 9.9]{Navarro}).
Similarly, if $e'>1$, then $p\nmid (q-\epsilon)=|\bZ(\wt{G})|$ and
an analogous statement holds for $\GG$ and $\wt{G}$. Hence, we begin
by studying $B_0(\wt{G})$, which will be sufficient for our purposes
in the case $e'>1$.

Let $n=we'+m$ with $0\leq m<e'$. Set
$p^a:=(\overline{q}^{\overline{e}}-1)_p\geq p$. The case $p\leq w$
was treated in Section \ref{sec:nonabelian}, so we assume that
$p>w$. Note that by \cite[Theorem 1.9]{Michler-Olsson},
$B_0(\wt{G})$ and $B_0(\GL^\epsilon_{we'}(q))$ have the same number
of ordinary irreducible characters, so we may assume that $n=we'$. (Note that the action of $\Aut(S)$ is analogous as well.)

Let $\mathcal{F}(p,a)$ denote the set of monic polynomials over
$\mathbb{F}_{\overline{q}}$ in the set $\mathscr{F}$ defined in
\cite{FS82} whose roots have $p$-power order in
$\overline{\FF}_{{q}}^{\times}$ at most $p^a$. Note that
$|\mathcal{F}(p,a)|=1+(p^a-1)/e'$, see \cite[p.
211]{Michler-Olsson}.

The conjugacy classes $(t):=t^{\wt{G}}$ of $p$-elements in $\wt{G}$
are parameterized by $p$-weight vectors of $w$, which are functions
$\mathbf{w}:=\mathbf{w}_{(t)}:\mathcal{F}(p,a)\rightarrow
\ZZ_{\geq0}$ such that $w=\sum_{g\in \mathcal{F}(p,a)}
\mathbf{w}(g)$. The characteristic polynomial of elements in $(t)$
is
\[
(x-1)^{e'\mathbf{w}(x-1)} \prod_{x-1\neq g\in \mathcal{F}(p,a)}
g^{\mathbf{w}(g)},
\]
and the centralizer of $t$ is
\[
\bC_{\wt{G}}(t)=\GL^\epsilon_{e'\mathbf{w}(x-1)}(q)\times
\prod_{x-1\neq g\in \mathcal{F}(p,a)}
\GL^\eta_{\mathbf{w}(g)}(q^{{e'}})
\]
 where $\eta=\epsilon$ unless $\epsilon=-1$ and $e'=2\overline{e}$, in which case $\eta=1$.

Each character in the Lusztig series $\mathcal{E}(\wt{G},t)$ is
labeled by $\chi_{t,\psi}$ where $\psi$ is a unipotent character of
$\bC_{\wt{G}}(t)$. So $\psi=\prod_{g\in\mathcal{F}(p,a)} \psi_g$
where $\psi_g$ is a unipotent character of
$\GL^\eta_{\mathbf{w}(g)}(q^{{e'}})$ if $g\neq x-1$ and of
$\GL_{e'\mathbf{w}(x-1)}(q)$ if $g=x-1$. Note that there is a
canonical correspondence between unipotent characters of
$\GL^\pm_x(q)$ and partitions of $x$, so we may view $\psi_g$ as a
partition of $\mathbf{w}(g)$ when $g\neq x-1$ and of
$e'\mathbf{w}(x-1)$ when $g=x-1$.
Further, by \cite[Theorem (7A)]{FS82}, %a result of Fong-Srinivasan (see \cite[p. 207]{Michler-Olsson}),
the characters of $B_0(\wt{G})$ are exactly those $\chi_{t,\psi}$
satisfying $t$ is a $p$-element and the partition $\psi_{x-1}$ has
trivial $e'$-core.

By  \cite[Proposition 6]{Olsson84},
\[
k(B_0(\wt{G}))=k\left(e'+\frac{p^a-1}{e'},w\right),
\]
where $k(x,y)$ is as defined in Section \ref{sec:alt} above.  This
number is at least
\begin{equation}\label{eq:step1}
e'+\frac{p^a-1}{e'}\geq 2\sqrt{p^a-1}\geq 2\sqrt{p-1}.
\end{equation} But, recall that we wish to show that there are at least $2\sqrt{p-1}$ orbits on
$\Irr(B_0(S))$ under $\Aut(S)$.

Now, by taking $t=1$, the number of unipotent characters in
$B_0(\wt{G})$ is precisely $k(e',w)$. Note that $k(e',w)\geq
k(e',1)=e'$, and that further $k(e',w)\geq 2e'$ if $w\geq 2$ with strict inequality for $(e', w)\neq (1,2)$, and each unipotent character is $\Aut(S)$-invariant. So
we have at least $e'$ $\Aut(S)$-orbits of unipotent characters in
$B_0(\GG)$, and hence of $B_0(S)$, since restriction yields a
bijection between unipotent characters of $S$ and $\GG$.

Let $\wt{\bG}:=\GL_n(\overline{\mathbb{F}}_q)$ so that
$\wt{G}=\wt{\bG}^F$.  Since $\bZ(\wt{\bG})$ is connected,
\cite[Theorem 3.1]{CS13} yields that the ``Jordan decomposition"
$\psi_{t,\psi}\leftrightarrow(t, \psi)$ can be chosen to be
$\Aut(S)$-equivariant.  Since $\psi$ is a unipotent character of a
product of groups of the form $\GL^\pm_x(q^d)$, which are invariant
under automorphisms as discussed above, it follows that the orbit of
$\chi_{t,\psi}$ is completely determined by the action of $\Aut(S)$
on the class $(t)$.

Now, recall that the $\wt{G}$-class of $t$ is completely determined
by its eigenvalues.  Let  $|t|=p^c$ and note that $c\leq a$. By
viewing $t$ as an element  $1 \times \prod_{x-1\neq g\in
\mathcal{F}(p,a)} \zeta_g$ of \[\bZ(\bC_{\wt{G}}(t))\cong
C_{q-\epsilon} \times \prod_{x-1\neq g\in \mathcal{F}(p,a)}
C_{q^{e'}-\eta}\]
we see that for $\alpha\in\Aut(S)$, the eigenvalues of $t^\alpha$ are those of $t$ raised to some power $\eta q_0^e$ for some $\eta\in\{\pm1\}$ and some $q_0$ such that $\overline{q}$ is a power of $q_0$. 
This implies that the $\Aut(S)$-orbit of $(t)$ has size at most $\frac{p^c-1}{e'}\leq \frac{p^a-1}{e'}$.

Now, the Sylow $p$-subgroup $P$ of $\wt{G}$ is of the form
$C_{p^a}^w\leq (\mathbb{F}_{\overline{q}^{\overline{e}}}^\times)^w$.
Then if $w=1$, $P$ is cyclic, and hence we may assume that $w\geq
2$. In this case, we have at least
$\frac{1}{2}\left(\frac{p^a-1}{e'}\right)^2$ choices for $(t)\neq
(1)$, and hence at least
$\frac{1}{2}\left(\frac{p^a-1}{e'}\right)^2$ non-unipotent
characters in $B_0(\wt{G})$ by taking $\psi_{x-1}$ to be trivial.
This gives at least $\frac{p^a-1}{2e'}$ distinct orbits of
non-unipotent characters, and hence more than $2\sqrt{p-1}$ orbits
of characters in $B_0(\GG)$ under $\Aut(S)$ when $e'>1$, by \eqref{eq:step1} with $2e'$ rather than $e'$.
This completes the proof of Theorem \ref{main-thm-simple} for $S$ in
the case $e'>1$ by the discussion at the beginning of the section.

Finally suppose $e'=1$, so $w=n\geq 3$ and we may continue to assume
$p>w$. Consider the elements $t$ of $\wt{G}$ whose eigenvalues are
of the form $\{\zeta, \xi, (\zeta\xi)^{-1}, 1,\ldots, 1\}$ with
$\zeta$ and $\xi$ $p$-elements of
$C_{q-\epsilon}\leq\mathbb{F}_{\overline{q}}^\times$.   Note that
each member of $\mathcal{E}(\wt{G}, t)$ lies in the principal block
of $\wt{G}$ and that $t$ lies in $G=[\wt{G}, \wt{G}]$.  Further, $t$
cannot be conjugate to $tz$ for any nontrivial $z\in \bZ(\wt{G})$,
since such a $z$ would have determinant 1 and $p$-power order,
contradicting $p>n$. Then using \cite[Lemma 1.4]{RSV21} and
\cite[Proposition 2.6]{SFT21}, each character in such a
$\mathcal{E}(\wt{G}, t)$ is irreducible on restriction to $G$,
yielding at least $\frac{(p^a-1)^2}{2}$ non-unipotent members of $B_0(G)$.
Since the $\Aut(S)$-orbits of such characters are again of size at
most $p^a-1$, this yields at least $2+\frac{p^a-1}{2}$ distinct orbits, which is
larger than $2\sqrt{p-1}$.  This completes the proof
of Theorem \ref{main-thm-simple} in the case that
$S=\PSL^\epsilon_n(q)$.
\end{proof}

%%%%%%%%%%%%%%%%%%%%%%%%%%%%%%%%%%%%%%%%%%%%%%%%%%%%%%%

\section{Symplectic and Orthogonal Groups}\label{sec:symp-orthog}
In this section, we consider the simple groups coming from
orthogonal and symplectic groups.  That is, simple groups of Lie
type $B_n, C_n, D_n$, and $\tw{2}D_n$.  We let
$\epsilon\in\{\pm1\}$, and let $\POmega_{2n}^\epsilon(q)$ denote the
simple group of Lie type $D_n(q)$ for $\epsilon=1$ and of type
$\tw{2}D_n(q)$ for $\epsilon=-1$.

\begin{proposition}\label{prop:classical}
Let $q$ be a power of a prime different from $p$ and let $S=\PSp_{2n}(q)$ with $n\geq 2$, $\POmega_{2n+1}(q)$ with $n\geq 3$, or $\POmega^\epsilon_{2n}(q)$ with $n\geq 4$.  Then Theorem \ref{main-thm-simple} holds for any almost simple group $A$ with socle $S$ and $p\nmid |A/S|$.
\end{proposition}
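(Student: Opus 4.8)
The plan is to follow the blueprint established for the linear and unitary groups in Proposition~\ref{prop:linear}, working in the group $\wt G$ of ``full'' isometries (i.e., $\mathrm{CSp}_{2n}(q)$, $\mathrm{SO}_{2n+1}(q)$, or $\mathrm{CO}^\epsilon_{2n}(q)$, whose connected component has connected center), where the Jordan decomposition can be chosen $\Aut(S)$-equivariantly, and then transfer to $S$ via restriction/inflation after checking the relevant central quotients are prime to $p$. First I would dispose of the already-handled cases: by Lemma~\ref{lemma-KS} we may assume Sylow $p$-subgroups are non-cyclic, by Section~\ref{sec:defining} that $p\nmid q$, by Section~\ref{sec:nonabelian} that the Sylow $p$-subgroups of $\GG=\bG^F$ are abelian (so there is a unique $d=e_p(q)$ with $p\mid\Phi_d(q)$ dividing the order polynomial), and that $p\geq 11$; one also records that $p\nmid q-\epsilon$ and, using $p\geq 11 > $ the relevant small primes, that $p$ is good for $\bG$, so Theorem~\ref{lemma-red-gp} applies to semisimple elements of $p$-power order with connected centralizer.

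Next I would set up the combinatorial parametrization exactly as in \cite{FS82} and the analogues of \cite{Olsson84,Michler-Olsson} for classical groups: define $e$ as in Section~\ref{sec:nonabelian} (the smallest positive integer with $p\mid q^e\pm 1$), set $p^a := (q^e - 1)_p \geq p$ (or the appropriate $q^e+1$ variant), write $n = we + m$ with $0\le m< e$, and reduce — via the analogue of \cite[Theorem 1.9]{Michler-Olsson} for symplectic/orthogonal groups — to the case $m=0$, i.e. $n=we$. Then the conjugacy classes of $p$-elements $t$ in $\wt G$ are parametrized by $p$-weight vectors on a set of polynomials of size roughly $1 + (p^a-1)/(2e)$ (the polynomials coming in $\{g, g^*\}$ pairs under the involution $x\mapsto x^{-1}$ since we are in an orthogonal/symplectic setting), the centralizer $\bC_{\wt G}(t)$ is a direct product of a classical group of the same type (the part at $x\mp 1$) with general linear/unitary factors $\GL^{\pm}_{\mathbf w(g)}(q^{2e})$ over the non-real eigenvalue pairs, and the characters of $B_0(\wt G)$ are the $\chi_{t,\psi}$ with $t$ a $p$-element and the partition-part of $\psi$ at $x\mp 1$ having trivial $e$-core. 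Counting then gives $k(B_0(\wt G)) = k\!\big(\text{something like } e + (p^a-1)/(2e),\, w\big)$, at least $2\sqrt{p-1}$ by the AM-paragraph of Section~\ref{sec:AM} or directly, and — more importantly — at least $k(e,w)\ge e$ unipotent characters, all $\Aut(S)$-invariant (Lusztig), plus, for $w\geq 2$, roughly $\tfrac12\big((p^a-1)/(2e)\big)^2$ non-unipotent characters obtained by choosing $t\neq 1$ with trivial partition-part. Since the $\Aut(S)$-orbit of $(t)$ has size at most $p^a-1$ (field and diagonal automorphisms raise eigenvalues to a power), this yields at least $(p^a-1)/(4e)$ orbits of non-unipotent characters in $B_0(S)$ and hence more than $2\sqrt{p-1}$ $\Aut(S)$-orbits total when $w\geq 2$; when $w=1$ the Sylow $p$-subgroup is cyclic, contradiction, so that case does not arise.

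The remaining work is the passage from $\wt G$ down to $G = \Omega/\Sp$ and then to $S$, and here the main obstacle — exactly as in the $e'=1$ portion of Proposition~\ref{prop:linear} — is controlling irreducible restriction and the action of $\bZ(\wt G)$ on Lusztig series, because for orthogonal groups the center and the component-group issues ($\SO$ versus $\Omega$, spin, triality for $D_4$) are more delicate than in type $A$. My plan is to choose the test elements $t$ so that their eigenvalue multiset forces $t$ to lie in $[\wt G,\wt G]$ and to be non-conjugate to $tz$ for any nontrivial $z\in\bZ(\wt G)$ (using $p > n$, so a $p$-element central twist would have too-small order), and then invoke the clifford-theoretic tools already cited — \cite[Lemma 1.4]{RSV21}, \cite[Proposition 2.6]{SFT21}, and the $\Aut(S)$-equivariant Jordan decomposition of \cite[Theorem 3.1]{CS13} for the connected-center overgroup — to conclude that the chosen $\chi_{t,\psi}$ restrict irreducibly to $G$ and that the orbit of $\chi_{t,\psi}$ is governed by the orbit of $(t)$. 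For $D_4$ one has the extra triality automorphism, but it only multiplies orbit sizes by a bounded factor ($\le 3$, as in Section~\ref{sec:nonabelian}), which is absorbed by the slack in the bound since $p\ge 11$. Finally, I would double-check the handful of small-rank base cases ($\PSp_4$, $\POmega_7$, $\POmega_8^{\pm}$, and a few small $q$) where the reduction to $n=we$ or the orbit-size estimates are tight, using \cite{Atl1,atlas2,GAP48} if necessary, to complete the proof of Theorem~\ref{main-thm-simple} for $S$ of type $B_n$, $C_n$, $D_n$, $\tw2D_n$.
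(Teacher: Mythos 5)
Your overall blueprint matches the paper's: reduce to $p\geq 11$ with abelian non-cyclic Sylow $p$-subgroups, invoke the classical-group analogue of Michler--Olsson (here \cite[Propositions 5.4 and 5.5]{malle17}), count unipotent and non-unipotent semisimple characters separately, and control $\Aut(S)$-orbit sizes. One route difference worth noting: the paper does not restrict from a conformal group $\wt G$ back down to $S$; instead it identifies $B_0(S)$ with $B_0(\Omega)$ or $B_0(G)$, applies Theorem~\ref{lemma-red-gp} directly in $H$ or $G$ (using that all odd primes are good and centralizers of odd-order semisimple elements in $\bH^\ast,\bG^\ast$ are connected), and then uses the regular embedding only to show every character in $B_0(S)$ is invariant under diagonal automorphisms. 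Your approach through $\wt G$ could in principle work, but the restriction bookkeeping you sketch (choosing $t$ with eigenvalues forcing $t\in[\wt G,\wt G]$ and non-conjugacy to $tz$) is more delicate in the orthogonal types than in type $A$ because of $\SO$ vs.\ $\Omega$ vs.\ $\mathrm{Spin}$.

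There is, however, a genuine gap in your treatment of the exceptional cases $S=\Sp_4(2^f)$ ($f$ odd) and $S=D_4(q)=\POmega_8^+(q)$. You claim the triality factor ``only multiplies orbit sizes by a bounded factor ($\leq 3$), which is absorbed by the slack in the bound since $p\geq 11$,'' and that remaining small-rank base cases can be checked in \textsf{GAP}. Both claims fail. First, these are infinite families in $q$ and $p$, so a computer check is not available. Second, and more importantly, the slack is \emph{not} enough for part (i): for $D_4$ with $e=w=2$ the orbit count is $10 + (p^a-1)/24$, and for $\Sp_4(2^f)$ it is $4+(p^a-1)/8$; with $p^a$ possibly equal to $p$, these fall below $2\sqrt{p-1}$ for a large range of $p$ (roughly $11\leq p\lesssim 200$ for $\Sp_4$, and an even longer range for $D_4$). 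They do exceed $2(p-1)^{1/4}$, which is why part (ii) is fine, but part (i) requires the paper's separate Clifford-theoretic argument: set $X := SC_A(P)$ (resp.\ $X:=(\Gamma\cap A)C_A(P)$ with $\Gamma$ the inner-diagonal-graph subgroup for $D_4$), note $A/X$ is cyclic of some order $b$ and $B_0(A)$ is the unique block covering $B_0(X)$, then count characters of $B_0(A)$ above unipotents (contributing $\approx$~const$\cdot b$) and above semisimples (contributing $\approx (p^a-1)^2/(\text{const}\cdot b)$), and finally observe that the sum exceeds $2\sqrt{p-1}$ \emph{for all} $b$. Without this refinement your argument does not establish Theorem~\ref{main-thm-simple}(i) in those two families.
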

\begin{proof}
With the results of the previous sections, we may again assume that
$p\geq 11$ and that a Sylow $p$-subgroup of $S$ is abelian, but not
cyclic.

Let $H$ be the corresponding symplectic or special orthogonal group
$\Sp_{2n}(q)$, $\SO_{2n+1}(q)$, or $\SO_{2n}^\epsilon(q)$ and let
$(\bH,F)$ be the corresponding simple algebraic group and Frobenius
endomorphism so that $H=\bH^F$. Let $G=\bG^F$ be the corresponding group
of simply connected type, so that $G=H$ in the symplectic case or
$G$ is the appropriate spin group in the orthogonal cases. Further,
let $(\bH^\ast, F)$ and $(\bG^\ast, F)$ be dual to $(\bH, F)$ and
$(\bG, F)$, respectively, and $H^\ast={\bH^\ast}^{F}$ and
$G^\ast={\bG^\ast}^F$.

Define $\bar H$ to be the group $\GO^\epsilon_{2n}(q)$ in the case
$S=\POmega^\epsilon_{2n}(q)$, and $\bar H := H$ otherwise.  We also
let $\Omega$ be the unique subgroup of index $2$ in $H$ for the
orthogonal cases when $q$ is odd, and let $\Omega=H$ otherwise, so that
$\Omega/\bZ(\Omega)=S=G/\bZ(G)$ and $\Omega\lhd \bar{H}$.  Note that
$B_0(S)$ can be identified with $B_0(\Omega)$ or with $B_0(G)$, by
\cite[Theorem 9.9]{Navarro}.

Now, let $e:=e_p(q)/\gcd(e_p(q), 2)$ and write $n=we+m$ with $0\leq m<e$. From Section \ref{sec:nonabelian},
we may again assume $w<p$. To obtain our result, we will rely on the case
of linear groups and will use some of the ideas of the arguments used in \cite[Propositions 5.4 and 5.5]{malle17},
which provides an analogue in this situation to the results of Michler and Olsson discussed above.
Namely, \cite[Propositions 5.4 and 5.5]{malle17} tells us
\[k(B_0(\bar H))=k\left(2e+\frac{p^a-1}{2e}, w\right),\] where $p^a=(q^{2e}-1)_p$.
Note that again, this number is at least $2\sqrt{p-1}$ (with strict inequality when $w\geq 2$), but that we wish to show the inequality for $k(B_0(A))$.  In most cases, we will again show that the number of $\Aut(S)$-orbits of characters in $B_0(S)$ is at least $2\sqrt{p-1}$.

If $w=1$, a Sylow $p$-subgroup of $\Omega$, $G$, $H$, or $\bar H$
(recall $p\geq 11$) is cyclic, so we may assume by Lemma
\ref{lemma-KS} that $w\geq 2$. Note that the unipotent characters of
$H$ are irreducible on restriction to $\Omega$. Arguing as in the
first paragraph of the proof of \cite[Lemma 3.10]{RSV21}, if $S\neq
D_4(q)$ nor $\Sp_4(2^{f})$ with $f$ odd, then the number of
$\Aut(S)$-orbits of unipotent characters in $B_0({H})$, and hence
$B_0(S)$, is $k(2e, w)$.  Note that $k(2e, w)> 4e$ since $w\geq 2$.

The characters in $B_0(H)$ and $B_0(G)$ lie in Lusztig series
indexed by $p$-elements $t$ of $H^\ast$, respectively $G^\ast$, by
\cite[Theorem 9.12]{Cabanes-book}. Note that centralizers of
odd-order elements of $\bH^\ast$ and of $\bG^\ast$ are always
connected (see e.g. \cite[Exercise (20.16)]{malletesterman}) and
that every odd $p$ is good for $\bH$ and $\bG$, so that $\chi_{(t)}$
lies in $B_0(H)$, respectively $B_0(G)$, for every $p$-element $t$
of $H^\ast$, respectively $G^\ast$, by Theorem \ref{lemma-red-gp}.
Further, note that the action on $\chi_{(t)}$ under a graph-field
automorphism of $H$ is determined by the action of a corresponding
graph-field automorphism on $(t)$, by \cite[Corollary
2.8]{Navarro-Tiep-Turull08}. (See also \eqref{eq:semisimple} below.)

Now let $\bG\hookrightarrow\wt{\bG}$ be a regular embedding as in
\cite[15.1]{Cabanes-book} and let $\wt{G}:=\wt{\bG}^F$.  Then the
action of $\wt{G}$ on $G$ induces all diagonal automorphisms of $S$.
Now, since $C_{\bG^\ast}(t)$ is connected for any $p$-element $t\in
G^\ast$, we have every character in $\mathcal{E}(G, (t))$ extends to
a character in $\wt{G}$.  (Indeed, since $\wt{G}/G$ is abelian and
restrictions from $\wt{G}$ to $G$ are multiplicity-free, the number
of characters lying below a given $\wt{\chi}\in\Irr(\wt{G})$ is the
number of $\beta\in\Irr(\wt{G}/G)$ such that
$\wt{\chi}\beta=\wt{\chi}$, as noted in \cite[Lemma 1.4]{RSV21}.
Hence \cite[Corollary 2.8]{bonnafe} and \cite[Proposition
2.6]{SFT21} yields the claim.) Therefore, each member of $B_0(S)$ is
invariant under diagonal automorphisms.

First consider the case $H=\SO_{2n+1}(q)$ or $\Sp_{2n}(q)$, so
$H^\ast=\Sp_{2n}(q)$ or $\SO_{2n+1}(q)$, respectively. Note that
$\Aut(S)/S$ in this case is generated by field automorphisms, which
also act on $H$, along with a diagonal or graph automorphism of
order at most $2$.

If $H=\SO_{2n+1}(q)$, then $\GL_n(q)$ may be embedded into
$H^\ast=\Sp_{2n}(q)$ in a natural way (namely, block diagonally as
the set of matrices of the form $(A, A^{-T})$ for $A\in\GL_n(q)$),
and the conjugacy class of $t$ is again determined by its
eigenvalues. Arguing as in the case of $\SL_n(q)$ above and noting
that every eigenvalue of $t$ must have the same multiplicity as its
inverse, we then have at least $\frac{p^a-1}{4e}$ distinct orbits of
non-unipotent characters in $B_0(H)$ under the field automorphisms,
and hence at least  $\frac{p^a-1}{4e}$  orbits in $B_0(S)$ under
$\Aut(S)$.  This gives more than $4e+\frac{p^a-1}{4e}$ orbits in
$\Irr(B_0(S))$ under $\Aut(S)$, which proves Theorem
\ref{main-thm-simple} in this case using \eqref{eq:step1}.

If $H=\Sp_{2n}(q)$, by \cite[Theorem 4.2]{geckhiss91}, there is a
bijection between classes of $p$-elements of $H$ and $H^\ast$, and
we note that field automorphisms act analogously on the $p$-elements
of $H$ and $H^\ast$.  Then the above again yields the result in this
case as long as $S\neq \Sp_4(2^f)$ with $f$ odd.

If $S= \Sp_4(2^f)$ with $f$ odd, then we must have $e=1$ and $w=2$.  Here \cite[Theorem 2.5]{Malle08} tells us that there is a
pair of unipotent characters permuted by the exceptional graph
automorphism, leaving $k(2,2)-1=4$
orbits of unipotent characters in $B_0(S)$ under $\Aut(S)$.  In this case, arguing as before and considering the action of the graph automorphism gives at least $4+\frac{p^a-1}{8}$ orbits in $B_0(S)$ under $\Aut(S)$, which is at least $2(p-1)^{1/4}$.  Hence part (ii) of Theorem \ref{main-thm-simple} holds.  So let $S\leq A\leq\Aut(S)$, and we wish to show that $B_0(A)$ contains more than $2\sqrt{p-1}$ characters. Note that in this case, $\Aut(S)/S$ is cyclic.  Let $X:=SC_A(P)$ for $P\in\Syl_p(S)$. Then $A/X$ is cyclic, say of size $b$, and $B_0(A)$ is the unique block covering $B_0(X)$ by \cite[(9.19) and (9.20)]{Navarro}.  Note that since at least $3$ of the unipotent characters of $S$ are $A$-invariant, we have at least $3b$ characters in $B_0(A)$ lying above unipotent characters.  Further, since the automorphisms corresponding to those in $X$ stabilize $p$-classes in $G^\ast$, the arguments above give at least $\frac{1}{2}\cdot\left(\frac{p^a-1}{2}\right)^2$ members of $B_0(X)$ lying above semisimple characters of $S$, and hence there are at least $\frac{(p^a-1)^2}{8b}$ members of $B_0(A)$ lying above semisimple characters of $S$.  Note then that the size of $B_0(A)$ is at least $3b+\frac{(p^a-1)^2}{8b}$, which is larger than $2\sqrt{p-1}$, completing the proof in this case.

Now, suppose we are in the case that $\bar{H}=\GO_{2n}^\epsilon(q)$. Note that the action of $\bar{H}/H$ induces a graph automorphism of order 2 in the case $\epsilon=1$, and that $\Aut(S)/S$ is generated by a group of diagonal automorphisms of size at most 4, along with graph and field automorphisms. Further, note that the action of $H$ on $\Omega$ induces a diagonal automorphism of order $2$ on $S$.
We may embed $\bar{H}$ in $\SO_{2n+1}(q)$, and by \cite[Proof of Proposition 5.5]{malle17}, the classes of $p$-elements $t$ with Lusztig series
contributing to $B_0(\bar H)$ are parametrised exactly as in the case of $\SO_{2n+1}(q)$ above.

Assume that  $(n,\epsilon)\neq (4,1)$. By again considering semisimple characters $\chi_{(t)}$ of $H$ with $t\in H^\ast$ $p$-elements, we may conclude that the number of orbits of non-unipotent characters in $B_0(S)$ under the $\Aut(S)$  is at least $\frac{p^a-1}{4e}$.  This yields at least $k(2e,w)+\frac{p^a-1}{4e}$ orbits in $\Irr(B_0(S))$ under $\Aut(S)$.    Hence we have the number of $\Aut(S)$ orbits in $B_0(S)$ is strictly larger than $4e+\frac{p^a-1}{4e}$, completing Theorem \ref{main-thm-simple} again in this case using \eqref{eq:step1}.

Finally, suppose $S=D_4(q)=\POmega_8^+(q)$ so $\bar H=\GO_8^+(q)$.
In this case, the graph automorphisms generate a group of size 6,
and a triality graph automorphism of order 3 permutes two triples of
unipotent characters (see \cite[Theorem 2.5]{Malle08}).
Since $w\geq 2$, we have $(e, w)\in \{(1,4), (2,2)\}$. The arguments
above give at least $k(2e, w)-4+\frac{p^a-1}{12e}$ distinct
$\Aut(S)$-orbits in $\Irr(B_0(S))$.  Since $k(2,4)=20$, by again
applying \eqref{eq:step1}, we may assume $e=2=w$.  In this case, we
have $k(2e,
w)-4+\frac{p^a-1}{12e}=10+\frac{p^a-1}{24}>2(p-1)^{1/4}$, so Theorem
\ref{main-thm-simple}(ii) is proved in this case. 

Now, let $S\leq A\leq \Aut(S)$, let $\Gamma$ be the subgroup of $\Aut(S)$ generated by inner, diagonal, and graph automorphisms, and let $X:=(\Gamma\cap A)C_A(P)$.  Then $A/X$ is cyclic, and by \cite[(9.19) and (9.20)]{Navarro}, $B_0(A)$ is the unique block covering $B_0(X)$. Let $b:=|A/X|$.
Now, the arguments above give at least $\frac{1}{3}\cdot\frac{1}{2}\cdot\left(\frac{p-1}{4}\right)^2$ members of $B_0(X)$ lying above semisimple characters of $S$, since members of $C_A(P)$ correspond to automorphisms stabilizing classes of $p$-elements of $G^\ast$, and hence there are at least $\frac{(p-1)^2}{96b}$ members of $B_0(A)$ lying above semisimple characters of $S$.  Further, there are at least 10 characters in $B_0(X)$ lying above unipotent characters in $B_0(S)$.  Since unipotent characters extend to their inertia groups and are invariant under field automorphisms (see \cite[Theorems 2.4 and 2.5]{Malle08}), this gives at least $10b$ elements of $B_0(A)$ lying above unipotent characters of $S$.  Together, this gives $k(B_0(A))\geq 10b+\frac{(p-1)^2}{96b}>2\sqrt{p-1}$, proving part (i) of Theorem \ref{main-thm-simple}.
\end{proof}

%%%%%%%%%%%%%%%%%%%%%%%%%%%%%%%%%%%%%%%%%%%%%%%%%%%

\section{A general bound for the number of $\Aut(S)$-orbits on
$\Irr(B_0(S))$}\label{sec:gen-bound}

The aim of this section is to obtain a general bound for the number
of $\Aut(S)$-orbits on irreducible ordinary characters in the
principal block of $S$, for $S$ a simple group of Lie type.

Building off of Theorem \ref{lemma-red-gp}, we will show that the principal block of $S$ contains \emph{many}
irreducible semisimple characters. By controlling the length of
$\Aut(S)$-orbits on these characters, we are able to bound below the
number of $\Aut(S)$-orbits on $\Irr(B_0(S))$. The bound turns out to
be enough to prove Theorem~\ref{main-thm-simple} for groups of
exceptional types, at least in the case when the Sylow $p$-subgroups
of the group of inner and diagonal automorphisms of $S$ are abelian
but non-cyclic, which is precisely the case we need after
Sections~\ref{sec:AM} and \ref{sec:nonabelian}.

\subsection{Specific setup for our purpose}

From now on we will work with the following setup: $\bG$ is a simple
algebraic group of adjoint type defined over $\FF_q$ and $F$ a
Frobenius endomorphism on $\bG$ such that $S=[\GG,\GG]$ with
$\GG=\bG^F$. Let $(\bG^\ast,F^\ast)$ be the dual pair of $(\bG,F)$
and for simplicity we will use the same notation $F$ for $F^\ast$,
and thus $\bG^\ast$ is a simple algebraic group of simply connected
type and $S=\GG^\ast/\bZ(\GG^\ast)$, where $\GG^\ast:=(\bG^\ast)^F$.

Theorem~\ref{lemma-red-gp} has the following consequence.

\begin{lemma}\label{lemma:semisimple-char-in-B0} Assume the above notation.
Let $p$ be a good prime for $\bG$
 not dividing $q$. For every $p$-element $t$ of ${\GG^\ast}$, the
semisimple character $\chi_{(t)}\in \mathcal{E}(\GG,(t))$ belongs to
the principal block of $\GG$.
\end{lemma}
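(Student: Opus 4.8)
The plan is to obtain this as an immediate specialization of Theorem~\ref{lemma-red-gp}. The key observation is that, because $\bG$ is a simple algebraic group of \emph{adjoint} type, its center $\bZ(\bG)$ is trivial and in particular connected. Combined with the standing conventions of this section --- $\bG$ is a connected reductive group over $\FF_q$ with Frobenius endomorphism $F$ --- and the hypotheses of the lemma that $p$ is good for $\bG$ and does not divide $q$, this places us exactly in the situation covered by the second (``in particular'') assertion of Theorem~\ref{lemma-red-gp}.

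Concretely, I would argue as follows. Since $\bZ(\bG)$ is connected, the centralizer $\bC_{\bG^\ast}(t)$ is connected for every semisimple element $t\in\GG^\ast=(\bG^\ast)^F$, in particular for every $p$-element $t$; as $p$ is good for $\bG$, this centralizer is then a Levi subgroup of $\bG^\ast$, so the semisimple character $\chi_{(t)}\in\mathcal{E}(\GG,(t))$ is well-defined. Theorem~\ref{lemma-red-gp} now applies directly and yields $\chi_{(t)}\in B_0(\GG)$, which is precisely the claim.

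I do not expect any genuine obstacle here: the entire content of the lemma is carried by Theorem~\ref{lemma-red-gp}, and it is recorded separately only because the adjoint-type setup fixed at the start of Section~\ref{sec:gen-bound} is the one used repeatedly in the sequel. The single point deserving a line of justification is that an adjoint simple algebraic group has connected (indeed trivial) center, which is standard.
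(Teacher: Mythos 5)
Your proof is correct and essentially matches the paper's: both reduce the lemma immediately to Theorem~\ref{lemma-red-gp}, differing only in which of its two statements is invoked. You use the ``in particular'' clause via $\bZ(\bG)$ trivial (hence connected) for $\bG$ of adjoint type, whereas the paper applies the first statement after noting that $\bG^\ast$ is simply connected, so centralizers of semisimple elements are connected by Steinberg's theorem (citing \cite[Exercise 20.16]{malletesterman}); these are two standard and equivalent ways to reach the same connectedness conclusion in this setting.
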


\begin{proof}
Since $\bG^\ast$ is of simply connected type, the centralizer
$\bC_{\bG^\ast}(t)$ is connected for every semisimple element
$t\in\GG^\ast$, by \cite[Exercise 20.16]{malletesterman}. The lemma
follows from Theorem~\ref{lemma-red-gp}.
\end{proof}

\subsection{Orbits of semisimple characters} Knowing that the semisimple characters
$\chi_{(t)}\in\Irr(\GG)$ associated to $\GG^\ast$-conjugacy classes
of $p$-elements all belong to $B_0(\GG)$, we now wish to control the
number of orbits of the action of the automorphism group $\Aut(S)$
on these characters. By a result of Bonnaf\'{e}
\cite[\S2]{Navarro-Tiep-Turull08}, this action turns out to be
well-behaved.

Let $\alpha\in\Aut(\GG)$, which in our situation will be a product
of a field automorphism and a graph automorphism. It is easy to see
that $\alpha$ then can be extended to a bijective morphism
${\overline{\alpha}}: \bG\rightarrow \bG$ such that
${\overline{\alpha}}$ commutes with $F$. This ${\overline{\alpha}}$
induces a bijective morphism
$\overline{\alpha}^\ast:\bG^\ast\rightarrow \bG^\ast$ which commutes
with the dual of $F$. The restriction of $\overline{\alpha}^\ast$ to
$\GG^\ast$, which we denote by $\alpha^\ast$, is now an automorphism
of $\GG^\ast$. Recall that $\alpha\in\Aut(\GG)$ induces a natural
action on $\Irr(\GG)$ by $\chi^\alpha:=\chi\circ \alpha^{-1}$. By
\cite[\S2]{Navarro-Tiep-Turull08}, $\alpha$ maps the Lusztig series
$\mathcal{E}(\GG,(t))$ of $\GG$ associated to $(t)$ to the series
$\mathcal{E}(\GG,(\alpha^\ast(t)))$ associated to
$(\alpha^\ast(t))$. Consequently, if $\bC_{\bG^\ast}(t)$ is
connected, then $\bC_{\bG^\ast}(\alpha^\ast(t))$ is also connected,
and \begin{equation}\label{eq:semisimple}
{\chi_{(t)}}^\alpha=\chi_{(\alpha^\ast(t))},
\end{equation}
which means that an automorphism of $\GG$ maps a semisimple
character associated to a conjugacy class $(t)$ (of $\GG^\ast$) to
the semisimple character associated to $(\alpha^\ast(t))$.

Due to Section \ref{sec:nonabelian} and Subsection~\ref{sec:AM}, we
may assume that the Sylow $p$-subgroups of $\GG$ are abelian but not
cyclic. Therefore, $\GG$ is not of type $\ta B_2$ or $\ta G_2$.
Assume for a moment that $\GG$ is not of type $\ta F_4$ as well.
Then there is a unique positive integer $e$ such that $p\mid
\Phi_e(q)$ and $\Phi_e$ divides the generic order of $\GG$. (Recall
that $\Phi_e$ denotes the $e$th cyclotomic polynomial.) This $e$
then must be the multiplicative order of $q$ modulo $p$, which means
that $p\mid (q^e-1)$ but $p\nmid (q^i-1)$ for every $0<i<e$. In the
case $\GG$ is of type $\ta F_4$, we use $\Phi_{4^{\pm}}(q):=q\pm
\sqrt{2q}+1$, and what we discuss below still holds with slight
modification.

Let $\Phi_e(q)=p^am$ where $\gcd(p,m)=1$ and $\Phi_e^{k_e}$ the
precise power of $\Phi_e$ dividing the generic order of $\GG$. We
will use $k$ for $k_e$ for convenience if $e$ is not specified. A
Sylow $e$-torus of $\GG^\ast$ has order $\Phi_e(q)^k$ and contains a
Sylow $p$-subgroup of $\GG^\ast$. Sylow $p$-subgroups of $\GG^\ast$
(and $\GG$) are then isomorphic to
\[
\underbrace{C_{p^a}\times C_{p^a}\times\cdots \times C_{p^a}}_{k
\text{ times}}.
\]

Assume that $q=\ell^f$ where $\ell$ is the defining characteristic
of $S$.

\begin{lemma}\label{lemma-field-auto} Assume the above notation. Let $\alpha$ be a field
automorphism of $\GG$ of order $f$, and thus $\langle \alpha\rangle$
is the group of field automorphisms of $\GG$. Each $\alpha$-orbit on
semisimple characters $\chi_{(t)}\in\Irr(\GG)$ associated to
conjugacy classes of $p$-elements ($p\neq \ell$) has length at most
$\min\{f,p^a-p^{a-1}\}$.
\end{lemma}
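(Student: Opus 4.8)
The plan is to reduce the claim to a statement about the action of the field automorphism $\alpha^\ast$ on $\GG^\ast$-conjugacy classes of $p$-elements. By \eqref{eq:semisimple}, $\alpha$ sends $\chi_{(t)}$ to $\chi_{(\alpha^\ast(t))}$, so the $\langle\alpha\rangle$-orbit of $\chi_{(t)}$ has the same length as the $\langle\alpha^\ast\rangle$-orbit of the class $(t)$ in $\GG^\ast$. Hence it suffices to bound the length of each $\langle\alpha^\ast\rangle$-orbit on classes of $p$-elements by $\min\{f, p^a - p^{a-1}\}$.

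**First I would** observe that $\langle\alpha^\ast\rangle$ has order dividing $f$ (it is cyclic of order $f$ as the full group of field automorphisms of $\GG^\ast$), so every orbit has length at most $f$; that handles one half of the minimum. For the other half, fix a $p$-element $t\in\GG^\ast$ of order $p^c$ with $c\le a$, lying in a Sylow $e$-torus $\bT$ of $\GG^\ast$, which contains a Sylow $p$-subgroup $P\cong C_{p^a}^{k}$. The action of a field automorphism on such a torus is, up to the Weyl-group action, multiplication by $q$ on the character lattice, so $\alpha^\ast$ acts on the cyclic group $C_{p^a}$ of $p$-power elements essentially as the power map $x\mapsto x^{q}$ (more precisely as $x\mapsto x^{q_0}$ where $q = q_0^{f/?}$, but the relevant point is that the image of $q$ in $(\ZZ/p^a)^\times$ generates a subgroup whose order equals $e_p(q)\cdot p^{?}$ — in any case its order on $C_{p^a}$, namely the order of $q$ modulo $p^a$, is at most $\varphi(p^a) = p^a - p^{a-1}$). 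Since the $\langle\alpha^\ast\rangle$-orbit of $(t)$ injects (via the eigenvalue/coordinate description of semisimple classes meeting $\bT$) into an orbit of the cyclic group generated by this power map acting on tuples of elements of $C_{p^a}$, and since that cyclic group has order at most $p^a - p^{a-1}$, the orbit of $(t)$ has length at most $p^a - p^{a-1}$ as well.

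**The main obstacle** will be pinning down precisely how a field automorphism of order $f$ acts on a Sylow $e$-torus and hence on the Sylow $p$-subgroup: one must check that the induced map on $C_{p^a}^{k}$ is conjugate (inside the relative Weyl group times the cyclic group of field automorphisms) to a coordinatewise power map $x\mapsto x^{q}$, and that the relevant multiplicative order of $q$ modulo $p^a$ is genuinely bounded by $p^a - p^{a-1}$. The first point follows from the standard structure theory of $\Phi_e$-tori and the description of Frobenius actions on them (as in \cite{Broue-Malle-Michel} or \cite{malletesterman}); the second is the elementary fact that the order of any unit modulo $p^a$ divides $\varphi(p^a)=p^a-p^{a-1}$. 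One should also note that even when $e_p(q)$ differs from $e$ (the unitary/$\tw2 D_n$ twist, or the $\tw 2F_4$ case with $\Phi_{4^\pm}$), the same elementary bound applies because we are only counting orbit lengths, not identifying the orbits precisely.

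**Finally I would** assemble the two bounds: the orbit length is at most $f$ (cyclicity of the field automorphism group) and at most $p^a - p^{a-1}$ (the order of the power-map action on $C_{p^a}$), hence at most their minimum, which is exactly the asserted bound. The exceptional case $\GG$ of type $\tw2 F_4$ is dispatched with the same argument using $\Phi_{4^\pm}$ in place of $\Phi_e$, as indicated in the paragraph preceding the lemma.
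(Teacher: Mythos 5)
Your argument takes the same route as the paper: use \eqref{eq:semisimple} to reduce to bounding $\langle\alpha^\ast\rangle$-orbits on conjugacy classes of $p$-elements in $\GG^\ast$, note the $f$-bound from the cyclicity of the field-automorphism group, and then bound the orbit length via the order of a power-map action on $C_{p^a}$, which divides $\varphi(p^a)=p^a-p^{a-1}$ by Euler's theorem — exactly as the paper does by writing $t$ as $h_{\alpha_1}(\lambda_1)\cdots h_{\alpha_n}(\lambda_n)$ and observing $\lambda^{\ell^{p^a-p^{a-1}}}=\lambda$. Two small clarifications worth making: the generator of the field-automorphism group acts as the $\ell$-power map (with $q=\ell^f$ and $\ell$ the defining characteristic), not the $q$-power map, which is $F$ itself and acts trivially on $\GG^\ast$; and the orbit of the class $(t)$ is a quotient of, not a subset of, the orbit of the element $t$ — but both corrections leave your conclusion intact, since what matters is only that the relevant power is a unit modulo $p^a$ and the orbit of the class is no longer than the orbit of the element.
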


\begin{proof} Let $\alpha^\ast$ be an automorphism of $\GG^\ast$
constructed from $\alpha$ by the process described above. For
simplicity we use $\alpha$ for $\alpha^\ast$. By
\eqref{eq:semisimple} and since $|\alpha|=f$, it is enough to show
that each $\alpha$-orbit on $\GG^\ast$-conjugacy classes of
(semisimple) $p$-elements of $\GG^\ast$ is at most $p^a-p^{a-1}$.

Let $t\in\GG^\ast$ be a $p$-element. Note that each element in
$\GG^\ast$ conjugate to $t$ under $\bG^\ast$ is automatically
conjugate to $t$ under $\GG^\ast$, by \cite[(3.25)]{Digne-Michel91}
and the fact that $\bC_{\bG^\ast}(t)$ is connected. Let $t$ be
conjugate to $h_{\alpha_1}(\lambda_1)\cdots
h_{\alpha_n}(\lambda_n)$, where the $h_{\alpha_i}$ are the coroots
corresponding to a set of fundamental roots with respect to a
maximal torus $\bT^\ast$ of $\bG^\ast$ and $n$ is the rank of
$\bG^\ast$.  Since $\bG^\ast$ is simply connected, note that $(t_1,
\ldots, t_n)\mapsto h_{\alpha_1}(t_1)\cdots h_{\alpha_n}(t_n)$ is an
isomorphism from $(\overline{\FF}_q^\times)^n$ to $\bT^\ast$ (see
\cite[Theorem 1.12.5]{Gorensteinetal}).

%\textcolor{blue}{I made some changes in here, just because I'm not
%sure how common it is to use ``eigenvalues" for exceptional types}

Now, if $\lambda=\lambda_i$ for some $1\leq i\leq n$, then
$\lambda^{p^a}=1$, since $|t|\mid p^a$. Recall that $\ell\neq p$,
and thus $\ell^{p^a-p^{a-1}}\equiv 1(\bmod \,p^a)$ by Euler's
totient theorem. It follows that
$\lambda^{\ell^{p^a-p^{a-1}}}=\lambda$, which yields that the
$\alpha$-orbit on $(t)$ is contained in
$\{(t),(\alpha(t)),...,(\alpha^{p^a-p^{a-1}-1}(t))\}$, as desired.
\end{proof}

\subsection{A bound for the number of $\Aut(S)$-orbits on
$\Irr(B_0(S))$}\label{subsec:a-bound}

Let $\bT_e$ be an $F$-stable maximal torus containing a Sylow
$e$-torus of $\bG^\ast$, and thus $\bT_e$ contains a Sylow
$p$-subgroup $P$ of $\GG^\ast$. Let
$W(\bT_e):=\bN_{\GG^\ast}(\bT_e)/\bT_e^F$ be the relative Weyl group
of $\bT_e$. It is well-known that fusion of semisimple elements in a
maximal torus is controlled by its relative Weyl group (see
\cite[Exercise 20.12]{malletesterman} or \cite[p. 6]{Malle-Maroti}).
Therefore, the number of conjugacy classes of (nontrivial)
$p$-elements of $\GG^\ast$ is at least
\[
\frac{|P|-1}{|W(\bT_e)|}=\frac{p^{ak}-1}{|W(\bT_e)|}.
\]
Note that $\chi_{(t)}$ belongs to the Lusztig series
$\mathcal{E}(\GG,(t))$ defined by the conjugacy class $(t)$ and the
Lusztig series are disjoint, and so two semisimple characters
$\chi_{(t)}$ and $\chi_{(t_1)}$ are equal if and only if $t$ and
$t_1$ are conjugate in $\GG^\ast$. Therefore, using
Lemma~\ref{lemma:semisimple-char-in-B0}, we deduce that, when $p$ is
a good prime for $\bG$ and not dividing $q$,
\begin{equation}\label{eq:Irr-ss-B0}
|\Irr_{ss}(B_0(\GG))|\geq \frac{p^{ak}-1}{|W(\bT_e)|},
\end{equation}
where $\Irr_{ss}(B_0(\GG))$ denotes the set of (nontrivial)
semisimple characters (associated to $p$-elements of $\GG^\ast$) in
$B_0(\GG)$. Let $n(X,Y)$ denote the number of $X$-orbits on a set
$Y$. Using Lemma~\ref{lemma-field-auto}, we then have
\begin{align*}
n(\Aut(S),\Irr_{ss}(B_0(\GG)))&\geq
\frac{p^{ak}-1}{g\min\{f,p^a-p^{a-1}\}|W(\bT_e)|}\\
&\geq\frac{p^{k}-1}{g(p-1)|W(\bT_e)|},
\end{align*}
where $g$ is the order of the group of graph automorphisms of $S$.
Let $d:=|\GG/S|$ -- the order of the group of diagonal automorphisms
of $S$ and viewing the irreducible constituents of the restrictions
of semisimple characters of $\GG$ to $S$ as semisimple characters of
$S$, we now have
\begin{equation}\label{equ:ss-orbit-bound}
n(\Aut(S),\Irr_{ss}(B_0(S)))\geq \frac{p^{k}-1}{dg(p-1)|W(\bT_e)|}.
\end{equation}

We note that values of $d,f$, and $g$ for various families of simple
groups are known, see \cite[p. xvi]{Atl1} for instance.

We now turn to unipotent characters in the principal block $B_0(S)$.
Brou\'{e}, Malle, and Michel \cite[Theorem 3.2]{Broue-Malle-Michel}
partitioned the set $\mathcal{E}(\GG^\ast,1)$ of unipotent
characters of $\GG^\ast$ into $e$-Harish-Chandra series associated
to $e$-cuspidal pairs of $\bG^\ast$, and furthermore obtained
one-to-one correspondences between $e$-Harish-Chandra series and the
irreducible characters of the relative Weyl groups of the
$e$-cuspidal pairs defining these series. Cabanes and Enguehard
\cite[Theorem 21.7]{Cabanes-book} then proved the compatibility
between Brou\'{e}-Malle-Michel's partition of unipotent characters
of $\GG^\ast$ by $e$-Harish-Chandra series and the partition of
unipotent characters by unipotent blocks. These results imply that
the number of unipotent characters in $B_0(S)$ (and $B_0(\GG^\ast)$
as well) is the same as the number of conjugacy classes of the
relative Weyl group $W(\bL_e)$ of the centralizer
$\bL_e:=\bC_{\bG^\ast}(\bS_e)$ of a Sylow $e$-torus $\bS_e$ of
$\bG^\ast$. Here we note that $\bL_e$ is a minimal $e$-split Levi
subgroup of $\bG^\ast$, and $\bL_e=\bS_e$ if $\bS_e$ happens to be a
maximal torus of $\bG^\ast$, since every maximal torus is equal to
its centralizer in a connected reductive group.

By the aforementioned result of Lusztig (\cite[Theorem
2.5]{Malle08}), every unipotent character of a simple group of Lie
type lies in a $\Aut(S)$-orbit of length at most 3. In fact, every
unipotent character of $S$ is $\Aut(S)$-invariant, except in the
following cases:
\begin{enumerate}
\item $S=P\Omega_{2n}^+(q)$ ($n$ even), the graph automorphism of order 2
has $o_2(S)$ orbits of length 2, where $o_2(S)$ is the number of
degenerate symbols of defect 0 and rank $n$ parameterizing unipotent
characters of $S$ (see \cite[p. 471]{Carter85}).

\item $S = P\Omega^+_8(q)$, the graph automorphism of order 3 has
$o_3(S)=2$ orbits of length 3, each of which contains one pair of
characters parameterized by one degenerate symbol of defect 0 and
rank $2$ in (1).

\item $S = \Sp_4(2^f)$ ($f$ odd), the graph automorphism of order 2
has $o_2(S)=1$ orbit of length 2.

\item $S = G_2(3^f)$ ($f$ odd), the graph automorphism of order 2 has
$o_2(S)=1$ orbit of length 2 on unipotent characters.

\item $S= F_4(2^f)$ ($f$ odd), the graph automorphism of order
2 has $o_2(S)=8$ orbits of length 2 on unipotent characters.
\end{enumerate}

Combining this with the bound \eqref{equ:ss-orbit-bound}, we obtain:

\begin{theorem}\label{theorem-bound}
Let $S$ be a simple group of Lie type. Let $p$ be a good prime for
$S$ and different from the defining characteristic of $S$. Assume
that Sylow $p$-subgroups of the group of inner and diagonal
automorphisms of $S$ are abelian. Let $k,d,f,g$, $\bT_e$, and
$\bL_e$ as above. Let $n(S)$ denote the number of $\Aut(S)$-orbits
on irreducible ordinary characters in $B_0(S)$. Then
\begin{align*}
n(S) &\geq k(W(\bL_e))+ \frac{p^{k}-1}{dg(p-1)|W(\bT_e)|},
\end{align*}
except possibly the above cases $(1), (3), (4)$, and $(5)$ in which
the bound is lower by the number $o_2(S)$ of orbits of length $2$ on
unipotent characters and case $(2)$ in which the bound is lower by
$4$.
\end{theorem}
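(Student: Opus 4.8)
The plan is to split the characters of $B_0(S)$ that we can access into two families that do not interact under $\Aut(S)$ -- the unipotent characters lying in $B_0(S)$, and the nontrivial semisimple characters attached to $p$-elements of $\GG^\ast$ -- then bound the number of $\Aut(S)$-orbits on each family separately and add the two bounds, having first checked that the two families sit in disjoint unions of orbits.

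First I would treat the unipotent part. By the combination of Brou\'e--Malle--Michel \cite[Theorem 3.2]{Broue-Malle-Michel} and Cabanes--Enguehard \cite[Theorem 21.7]{Cabanes-book} recalled in Subsection~\ref{subsec:a-bound}, the unipotent characters of $S$ in $B_0(S)$ are in bijection with the conjugacy classes of the relative Weyl group $W(\bL_e)$ of $\bL_e=\bC_{\bG^\ast}(\bS_e)$, so there are exactly $k(W(\bL_e))$ of them (using $\Phi_{4^\pm}$ in place of a genuine cyclotomic polynomial when $\GG$ is of type $\ta F_4$, with no change to the conclusion). By Lusztig's theorem \cite[Theorem 2.5]{Malle08}, each of these characters is $\Aut(S)$-invariant except in the cases $(1)$--$(5)$ listed before the statement, so outside those cases they contribute exactly $k(W(\bL_e))$ distinct orbits. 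In cases $(1)$, $(3)$, $(4)$, $(5)$ the only non-invariant unipotent characters form $o_2(S)$ orbits of length $2$, so $2o_2(S)$ characters contribute only $o_2(S)$ orbits and the count drops by $o_2(S)$; in case $(2)$, $S=P\Omega_8^+(q)$, the triality fuses the two pairs coming from $(1)$ into two orbits of length $3$, so the six characters concerned contribute only two orbits and the count drops by $4$. This gives the unipotent contribution to both bounds in the statement.

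Next I would treat the semisimple part. By Lemma~\ref{lemma:semisimple-char-in-B0}, for every $p$-element $t$ of $\GG^\ast$ the semisimple character $\chi_{(t)}\in\mathcal{E}(\GG,(t))$ lies in $B_0(\GG)$, and hence its restriction to $S$ puts semisimple characters of $S$ into $B_0(S)$; by \eqref{equ:ss-orbit-bound} these form at least $\frac{p^k-1}{dg(p-1)|W(\bT_e)|}$ orbits under $\Aut(S)$, that inequality being exactly what was assembled in Subsection~\ref{subsec:a-bound} from fusion control of semisimple elements by $W(\bT_e)$ (i.e.\ \eqref{eq:Irr-ss-B0}), the orbit-length bound of Lemma~\ref{lemma-field-auto}, and the trivial bounds by the diagonal and graph automorphism numbers $d$ and $g$. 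To be allowed to add the two contributions, I would note that $\Aut(S)$ permutes Lusztig series as in \eqref{eq:semisimple} with $\alpha^\ast(1)=1$, so the trivial class is fixed by every $\alpha^\ast$; hence the unipotent characters and the semisimple characters attached to nontrivial $p$-elements lie in disjoint unions of $\Aut(S)$-orbits. Summing yields $n(S)\geq k(W(\bL_e))+\frac{p^k-1}{dg(p-1)|W(\bT_e)|}$ in the generic case and the corrected bound in cases $(1)$--$(5)$.

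The step I expect to be the main obstacle is precisely this disjointness bookkeeping that licenses the sum: I must be sure both that a semisimple character of $S$ attached to a nontrivial semisimple class is genuinely non-unipotent -- which I would get from disjointness of Lusztig series together with compatibility of Jordan decomposition with the isogeny $\GG^\ast\to S$ -- and that no $\Aut(S)$-orbit straddles the two families, which follows from $\alpha^\ast$ fixing the identity. Everything else is a direct assembly of Lemma~\ref{lemma:semisimple-char-in-B0}, the bound \eqref{equ:ss-orbit-bound}, the Brou\'e--Malle--Michel/Cabanes--Enguehard count of unipotent characters in $B_0(S)$, and Lusztig's classification of the short $\Aut(S)$-orbits on unipotent characters, together with the case-by-case deficit count for $(1)$--$(5)$.
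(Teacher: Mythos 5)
Your proposal is correct and follows essentially the same route as the paper: the unipotent contribution $k(W(\bL_e))$ via Brou\'e--Malle--Michel and Cabanes--Enguehard together with Lusztig's classification of the short $\Aut(S)$-orbits on unipotent characters, the semisimple contribution via Lemma~\ref{lemma:semisimple-char-in-B0}, the fusion bound~\eqref{eq:Irr-ss-B0}, and the orbit-length estimate~\eqref{equ:ss-orbit-bound}, then adding. The disjointness bookkeeping you flag (unipotent versus nontrivial-semisimple characters lying in disjoint unions of $\Aut(S)$-orbits, because $\alpha^\ast$ fixes $(1)$ and nontrivial $p$-elements stay noncentral) is a point the paper leaves implicit, but your justification is exactly the intended one.
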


We remark that when $e$ is regular for $\bG^\ast$, which means that
the centralizer $\bC_{\bG^\ast}(\bS_e)$ of the Sylow $e$-torus
$\bS_e$ is a maximal torus of $\bG^\ast$, the maximal torus $\bT_e$
containing $\bS_e$ can be chosen to be precisely
$\bL_e=\bC_{\bG^\ast}(\bS_e)$. When $e$ is not regular, $W(\bT_e)$
is always bigger than $W(\bL_e)$. However, for exceptional types,
$e$ being non-regular happens only when $\bG$ is of type $E_7$ and
$e\in\{4, 5, 8, 10, 12\}$, see \cite[Table 3]{Broue-Malle-Michel}.
We thank G. Malle for pointing out these facts to us.

We also remark that when the Sylow $p$-subgroups of the group of
inner and diagonal automorphisms of $S$ are furthermore noncyclic,
then $k\geq 2$, and, away from those exceptions, we have a rougher
bound
\begin{equation}\label{bound-rougher} n(S)\geq
k(W(\bL_e))+ \frac{p+1}{dg|W(\bT_e)|},
\end{equation} but turns out to be sufficient for our purpose in most cases.

%%%%%%%%%%%%%%%%%%%%%%%%%%%%%%%%%%%%%%%%%%%%%%%%%%%%%%%%%%%%%%%%%%%%%

\section{Groups of exceptional types}\label{sec:proof-1.2-except}

In this section we prove Theorem \ref{main-thm-simple} for $S$ being
of exceptional type. This is achieved by considering each type case
by case, with the help of Theorem~\ref{theorem-bound}.

We keep all the notation in Section \ref{sec:gen-bound}. In
particular, the underlying field of $S$ has order $q=\ell^f$. By
Section \ref{sec:AM}, we may assume that $\ell\neq p\geq 11$. This
assumption on $p$ guarantees that Sylow $p$-subgroups of $\GG$ are
abelian. Recall also that $e$ is the multiplicative order of $q$
modulo $p$, $p^a=\Phi_e(q)_{p}$, and $\Phi_e^k=\Phi_e^{k_e}$ is the
precise power of $\Phi_e$ dividing the generic order of $\GG$. By
Section \ref{sec:AM}, we may assume that the Sylow $p$-subgroups of
$S$ are not cyclic, and thus $k_e\geq 2$. Also, $\bS_e$ is a Sylow
$e$-torus of a simple algebraic group $\bG^\ast$ of simply connected
type associated with a Frobenius endomorphism $F$ such that
$S=\GG^\ast/\bZ(\GG^\ast)$ and $\GG^\ast:={\bG^\ast}^F$, and $\bT_e$
is an $F$-stable maximal torus of $\bG^\ast$ containing $\bS_e$. As
mentioned already, we choose $\bT_e=\bL_e:=\bC_{\GG^\ast}(\bS_e)$ if
$e$ is regular for $\bG^\ast$. This indeed is the case for all types
and all $e$, except the single case of type $E_7$ and $e=4$. The
relative Weyl groups $W(\bL_e)$ are always finite complex reflection
groups, and we will follow the notation for these groups in
\cite{Benard76}. Relative Weyl groups for various $\bL_e$ are
available in \cite[Tables 1 and 3]{Broue-Malle-Michel}. The
structure of $\Out(S)$ is available in \cite[Theorem
2.5.12]{Gorensteinetal}. We will use these data freely without
further notice.

It turns out that Theorem \ref{theorem-bound} is sufficient to prove
Theorem~\ref{main-thm-simple} whenever $k_e\geq 3$. In fact, even
when $k_e=2$, Theorem~\ref{theorem-bound} is also sufficient for
Theorem~\ref{main-thm-simple}(ii). We have to work harder, though,
to achieve Theorem~\ref{main-thm-simple}(i) in the case $k_e=2$ for
some types.

\begin{proposition}
Theorem \ref{main-thm-simple} holds for simple groups of exceptional
types.
\end{proposition}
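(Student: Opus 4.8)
The strategy is to go through the exceptional types $\tw{2}B_2$, $\tw{2}G_2$, $G_2$, $\tw{3}D_4$, $\tw{2}F_4$, $F_4$, $E_6$, $\tw{2}E_6$, $E_7$, $E_8$ one at a time, applying Theorem~\ref{theorem-bound} (or its rougher consequence \eqref{bound-rougher}) in each case. By the reductions of Section~\ref{sec:AM} and Section~\ref{sec:nonabelian}, I may assume $\ell \neq p \geq 11$, that Sylow $p$-subgroups of $S$ are abelian but noncyclic, and hence $k_e \geq 2$; in particular the Suzuki and small Ree groups are already handled, and for each remaining type only finitely many values of $e$ with $k_e \geq 2$ survive (read off from \cite[Tables~1 and~3]{Broue-Malle-Michel}). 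For each surviving pair $(S,e)$, I would record $d = |\GG/S|$, $f$, $g$, the relative Weyl groups $W(\bL_e)$ and $W(\bT_e)$ (with $\bT_e = \bL_e$ except in type $E_7$ with $e=4$), the number $k(W(\bL_e))$ of conjugacy classes, and then check the inequality
\[
k(W(\bL_e)) + \frac{p^k - 1}{dg(p-1)\,|W(\bT_e)|} \; > \; 2\sqrt{p-1}|\Out(S)|_{p'}
\]
(adjusting the unipotent term down by $o_2(S)$ or by $4$ in the exceptional cases (1)--(5) of Theorem~\ref{theorem-bound}), and the analogous $2(p-1)^{1/4}$ inequality for part~(ii).

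\textbf{Key steps.} First, dispose of all cases with $k_e \geq 3$: here $p^k - 1 \geq p^3 - 1$ grows fast enough that even the crude estimate $p^k - 1 \geq (p-1)^3$ beats $2\sqrt{p-1}\,|\Out(S)|$ once $p \geq 11$, since $dg|W(\bT_e)|$ is an absolute constant for each type. Second, treat $k_e = 2$ for part~(ii): the bound $k(W(\bL_e)) + \frac{p+1}{dg|W(\bT_e)|}$ from \eqref{bound-rougher} comfortably exceeds $2(p-1)^{1/4}$ for $p \geq 11$ because the right-hand side grows only like $p^{1/4}$ while the left grows linearly in $p$. Third — the genuinely delicate part — treat $k_e = 2$ for part~(i) in those types where $dg|W(\bT_e)|$ is large relative to the unipotent contribution $k(W(\bL_e))$ (types $E_6$, $\tw{2}E_6$, $E_7$, and $E_8$ for the ``large'' values of $e$ such as $e=3,4,6$ where $W(\bL_e)$ can be a sizeable complex reflection group but $|W(\bT_e)|$ is also large). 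In those cases I would need a sharper count: rather than bounding orbits of \emph{all} semisimple characters by $(p^{ak}-1)/(dg(p-1)|W(\bT_e)|)$, I would exhibit a concrete family of $p$-elements $t$ lying in $S = \GG^\ast/\bZ(\GG^\ast)$ whose centralizers are already Levi (automatic here since $\bG^\ast$ is simply connected, so $\bC_{\bG^\ast}(t)$ is connected by \cite[Exercise 20.16]{malletesterman}), whose semisimple characters therefore lie in $B_0$ by Lemma~\ref{lemma:semisimple-char-in-B0}, and count their $\Aut(S)$-orbits directly using \eqref{eq:semisimple} and Lemma~\ref{lemma-field-auto}, combining with the full unipotent count $k(W(\bL_e))$ (not just $k(W(\bL_e))/$something). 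As in the symplectic/orthogonal cases of Section~\ref{sec:symp-orthog}, for part~(i) one may also pass to an intermediate group $X = SC_A(P)$ with $A/X$ cyclic of order $b$, note $B_0(A)$ is the unique block over $B_0(X)$ by \cite[(9.19),(9.20)]{Navarro}, and use that at least $k(W(\bL_e)) - (\text{small})$ unipotent characters are $A$-invariant to get $k(B_0(A)) \geq b\cdot(\text{unipotent count}) + \frac{1}{b}\cdot(\text{semisimple count})/2$, whose minimum over $b$ is still $> 2\sqrt{p-1}$.

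\textbf{Main obstacle.} The bookkeeping itself — assembling the correct $(d,f,g, W(\bL_e), W(\bT_e))$ for every exceptional type and every $e$ with $k_e = 2$, and verifying the case (1)--(5) adjustments — is routine but error-prone; the real mathematical difficulty is concentrated in the $k_e = 2$, part~(i) cases for $E_7$ (where $e=4$ is non-regular so $W(\bT_e) \supsetneq W(\bL_e)$, making both the torus-based orbit bound weaker and requiring the direct semisimple-character construction) and possibly $E_8$ at $e = 4,6$, where $|W(\bT_e)|$ is largest. There I expect to need the finer argument of the previous paragraph rather than the off-the-shelf Theorem~\ref{theorem-bound}, together with a careful description of which $p$-classes in $\GG^\ast$ actually drop into $S$. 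If any single $(S,e)$ with $p \geq 11$ still resists, it would be a small finite check that could in principle be pushed into a computer computation, but I expect the inequalities to hold with room to spare once $p \geq 11$.
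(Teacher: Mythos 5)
Your overall strategy — casework by type, dispose of $k_e\geq 3$ via Theorem~\ref{theorem-bound}, handle part~(ii) for $k_e=2$ via \eqref{bound-rougher}, and then for part~(i) with $k_e=2$ pass to an intermediate subgroup $X=S\,\bC_G(P)$ (or $\langle G\cap\GG,\bC_G(P)\rangle$), use that $B_0(G)$ is the unique block over $B_0(X)$, count characters above unipotent characters of $S$ (linearly in $|G/X|$) against characters above semisimple characters (inversely in $|G/X|$), and optimize — is exactly what the paper does, so this is essentially the right proof.

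Two points need correcting, though neither breaks the strategy. First, the criterion you state,
\[
k(W(\bL_e)) + \frac{p^k - 1}{dg(p-1)\,|W(\bT_e)|} > 2\sqrt{p-1}\,|\Out(S)|_{p'},
\]
has a spurious factor on the right. Since each $\Aut(S)$-orbit on $\Irr(B_0(S))$ lies below at least one \emph{distinct} character of $\Irr(B_0(G))$ (Clifford theory for $B_0(G)$ covering $B_0(S)$), one already has $k(B_0(G))\geq n(S)$; the correct criterion is simply $n(S) > 2\sqrt{p-1}$, with no $|\Out(S)|$ factor. Had you kept the extra factor you would wrongly conclude Theorem~\ref{theorem-bound} fails even for large $k_e$ whenever $f=\log_\ell q$ is large. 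Second, for $k_e\geq 3$ the claim that ``$(p-1)^3$ beats $2\sqrt{p-1}|\Out(S)|$ once $p\geq 11$'' is not what carries the argument: $dg|W(\bT_e)|$ can be enormous (e.g.\ $|W(E_8)|\approx 7\times 10^8$), so the semisimple term may be negligible for moderate $p$. What actually closes those cases is that the unipotent term $k(W(\bL_e))$ is already well above $2\sqrt{p-1}$ for moderate $p$ (e.g.\ $k(W(E_8))=112$), while the semisimple term dominates for genuinely large $p$. With those two points fixed, your plan reproduces the paper's proof.
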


\begin{proof}

\textbf{(1) {$S=G_2(q)$ and $S=F_4(q):$}}

\smallskip

First we consider $S=G_2(q)$ (so $S=\GG$) with $q>2$. Then
$e\in\{1,2\}$ and $k_1=k_2=2$. Also, the Sylow $e$-tori are maximal
tori, and their relative Weyl groups are the dihedral group
$D_{12}$. The bound \eqref{bound-rougher} implies that $n(S)\geq
5+(p+1)/12$ for $q=3^f$ with odd $f$, and $n(S)\geq 6+(p+1)/12$
otherwise. In any case it follows that $n(S)>2(p-1)^{1/4}$, proving
Theorem ~\ref{main-thm-simple}(ii) for $G_2(q)$.

Note that $\Aut(S)$ is a cyclic extension of $S$. First assume that
$q\neq 3^f$ with odd $f$ or $G$ does not contain the graph
automorphism of $S$. In particular, every unipotent character of $S$
is extendible to $G$. Let $H:=\langle S,\bC_G(P)\rangle$, where $P$
is a Sylow $p$-subgroup of $G$ (and $S$ as well by the assumption
$p\nmid |G/S|$). Since $P\bC_G(P)$ is contained in $H$, $B_0(H)$ is
covered by a unique block of $G$, which is $B_0(G)$. It follows
that, each unipotent character in $B_0(S)$ extends to an irreducible
character in $B_0(H)$, which in turns lies under $|G/H|$ irreducible
characters in $B_0(G)$. Therefore, the number of irreducible
characters in $B_0(G)$ lying over unipotent characters of $S$ is at
least $k(D_{12})|G/H|=6|G/H|$. When $q=3^f$ with odd $f$ and $G$
does contain the nontrivial graph automorphism, similar arguments
yield that the number of irreducible characters in $B_0(G)$ lying
over unipotent characters of $S$ is at least $5|G/H|$.

On the other hand, each $G$-orbit on semisimple characters
(associated to $p$-elements) of $S$ now has length at most $|G/H|$
by \eqref{eq:semisimple} and the fact that $H=\langle
S,\bC_G(P)\rangle$ fixes every conjugacy class of $p$-elements of
$S$. Therefore, the bound \eqref{eq:Irr-ss-B0} yields
\[
n(G,\Irr_{ss}(B_0(S)))\geq \frac{p^{2}-1}{12|G/H|}.
\]
This and the conclusion of the last paragraph imply that
\[
k(B_0(G))\geq 5|G/H|+\frac{p^{2}-1}{12|G/H|}\geq
2\sqrt{\frac{5(p^2-1)}{12}},
\]
which in turns implies the desired bound $k(B_0(G))> 2 \sqrt{p-1}$
for all $p\geq 11$.

For $S=F_4(q)$, we have $e\in\{1,2\}$ for which $k_e=4$, or
$e\in\{3,4,6\}$ for which $k_e=2$. Therefore all the Sylow $e$-tori
are maximal tori, and their relative Weyl groups are
$G_{28}=\GO_4^+(3)$ for $e=1,2$; $G_5=\SL_2(3)\times C_3$ for
$e=3,6$; and $G_8=C_4.\Sy_4$ for $e=4$. Now we just follow along
similar arguments as above to prove the theorem for this type.

\medskip

\textbf{(2) {$S=\ta F_4(q)$ with $q=2^{2n+1}\geq 8$ and $S=\tb
D_4(q)$:}}

\smallskip

These two types are treated in a fairly similar way as for $G_2$.
Note that $\Out(S)$ here is always cyclic. First let $S=\ta F_4(q)$.
Then $e\in\{1,2,4^+,4^-\}$ and $k_e=2$ for all $e$. All the Sylow
$e$-tori are maximal. The relative Weyl groups of these tori are
$D_{16}$, $G_{12}=\GL_2(3)$, $G_8=C_4.\Sy_4$ and $G_8$ for
$e=1,2,4^+$, and $4^-$, respectively. One can now easily check the
inequality $n(S)\geq 2(p-1)^{1/4}$, using (\ref{bound-rougher}). The
bound $k(B_0(G))> 2 \sqrt{p-1}$ is proved similarly as in type
$G_2$.

Now let $S=\tb D_4(q)$. Then $e\in\{1,2,3,6\}$ and $k_e=2$ for all
$e$. For $e\in \{3,6\}$, a Sylow $e$-torus is maximal with the
relative Weyl group $G_4=\SL_2(3)$. For $e=1$ or $2$, Sylow $e$-tori
of $S$ are not maximal anymore but are contained in maximal tori of
orders $\Phi_1^2(q)\Phi_3(q)$ and $\Phi_2^2(q)\Phi_6(q)$,
respectively. The relative Weyl groups of these tori are both
isomorphic to $D_{12}$. Now the routine estimates are applied to
achieve the required bounds.

\medskip

\textbf{(3) $S=E_6(q)$ and $S=\ta E_6(q)$:}

\smallskip

These two types are approached similarly and we will provide details
only for $E_6$. Then $e=1$ for which $k_e=6$, or $e=2$ for which
$k_e=4$, or $e=3$ for which $k_e=3$, or $e\in\{4,6\}$ for which
$k_e=2$.

Assume $e=1$. Then $\bS_1$ is a maximal torus and its Weyl group is
$G_{35}=\SO_5(3)$. Theorem~\ref{theorem-bound} then implies that
\[n(S)\geq
k(\SO_5(3))+\frac{p^{6}-1}{6(p-1)|\SO_5(3)|}=25+\frac{p^{6}-1}{311040(p-1)}>2\sqrt{p-1},\]
proving both parts of Theorem \ref{main-thm-simple} in this case.
The case $e\in\{2,3\}$ is similar. We note that $\bS_3$ is a maximal
torus with the relative Weyl group $G_{25}=3^{1+2}.\SL_2(3)$, %whose the
%class number is 24.
and a maximal torus containing a Sylow $2$-torus has relative Weyl
group $G_{28}$.

Assume $e=4$. Then a maximal torus containing a Sylow $4$-torus of
$E_6(q)_{sc}$ has order $\Phi_4^2(q)\Phi_1^2(q)$ and its relative
Weyl group is $G_8=C_4.\Sy_4$, whose order is 96 and class number is
16. Now the bound \eqref{bound-rougher} yields $n(S)>2(p-1)^{1/4}$,
proving part (ii) of the theorem.

We need to to do more to obtain part (i) in this case. In fact, when
$2\sqrt{p-1}\leq 16$, which means that $p\leq 65$, we have $n(S)>
16\geq 2\sqrt{p-1}$, which proves part (i) as well. So let us assume
that $p>65$.

Note that $\Out(S)$ is a semidirect product $C_{(3,q-1)}\rtimes
(C_f\times C_2)$, which may not be abelian but every unipotent
character of $S$ is still fully extendible to $\Aut(S)$ by
\cite[Theorems 2.4 and 2.5]{Malle08}. As before, let $\GG$ be the
extension of $S$ by diagonal automorphisms. Similar to the proof for
type $G_2$, let $H:=\langle G\cap \GG,\bC_G(P)\rangle$, where $P$ is
a Sylow $p$-subgroup of $S$. Each unipotent character in $B_0(S)$
then lies under at least $|\Irr(G/H)|=|G/H|$ irreducible characters
in $B_0(G)$. (Here we note that $G/H$ is abelian.) Thus, the number
of irreducible characters in $B_0(G)$ lying over unipotent
characters of $S$ is at least $16|G/H|$.

As in Subsection~\ref{subsec:a-bound}, here we have
\[|\Irr_{ss}(B_0(\GG))|\geq \frac{p^2-1}{|W(\bT_4)|}=\frac{p^2-1}{96}.\]
Let $\Irr_{ss}(B_0(S))$ be the set of restrictions of characters in
$\Irr_{ss}(B_0(\GG))$ to $S$. Note that these restrictions are
irreducible as the sesisimple elements of $\GG^\ast$ associated to
these sesisimple characters are $p$-elements whose orders are
coprime to $|\bZ(\GG^\ast)|=\gcd(3,q-1)$. Also note that, if
 the restrictions of $\chi_{(t)}$ and $\chi_{(t_1)}$ to $S$ are the same, then $(t)=(t_1z)$
for some $z\in \bZ(\GG^\ast)$ (see \cite[Proposition 5.1]{Tiep15}),
which happens only when $z$ is trivial since $t$ and $t_1$ are
$p$-elements. It follows that
\[|\Irr_{ss}(B_0(S))|=|\Irr_{ss}(B_0(\GG))|\geq \frac{p^2-1}{96}.\]
Now, each $H$-orbit of conjugacy classes of $p$-elements of $S$ has
length at most $\gcd(3,q-1)\leq 3$. Therefore, each $G$-orbit of
semisimple characters in $B_0(S)$ has length at most $3|G/H|$, and
it follows that the number of irreducible characters in $B_0(G)$
lying over sesisimple characters in $B_0(S)$ is at least
${(p^2-1)}/({288|G/H|})$.
Together with the bound of $16|G/H|$ for the number of irreducible
characters in $B_0(G)$ lying over unipotent characters of $S$, we
deduce that
\[
k(B_0(G))\geq 16|G/H|+\frac{p^{2}-1}{288|G/H|}\geq
2\sqrt{\frac{16(p^2-1)}{288}},
\]
and thus, when $p>65$, the desired bound $k(B_0(G))> 2 \sqrt{p-1}$
follows.

The last case $e=6$ can be argued in a similar way, with notice that
a maximal torus containing a Sylow $6$-torus of $E_6(q)_{sc}$ has
order $\Phi_6^2(q)\Phi_3(q)$ and its relative Weyl group is
$G_5=\SL_2(3)\times C_3$, whose order is 72 and class number is 21.

%%%%%%%%%%%%%%%%%%%%%%%%%%%

\medskip

{\textbf{(4) $S=E_7(q)$:}}

\smallskip

Then $e\in\{1,2\}$ for which $k_e=7$, or $e\in\{3,6\}$ for which
$k_e=3$, or $e=4$ for which $k_e=2$. When $k_e>2$, the bound
\ref{bound-rougher} again is sufficient to achieve the desired bound
$n(S)> 2\sqrt{p-1}$. In fact, even for the case $k_e=2$, we have
$n(S)\geq 2(p-1)^{1/4}$. So it remains to prove
Theorem~\ref{main-thm-simple}(i) for $e=4$, in which case $e$ is not
regular and the relative Weyl group of the minimal $e$-split Levi
subgroup $\bL_e=\bS_e.A_1^3$ is $G_8$ (see
\cite[Table~1]{Broue-Malle-Michel}). As $A_1^3$ has eight classes of
maximal tori (two in each factor $A_1$), all of which have Weyl
group $C_2^3$, the relative Weyl group of every maximal torus
$\bT_e$ containing $\bS_e$ is of the form $G_8.C_2^3$. The estimates
are now similar to those in the case $e=4$ of the type $E_6$.

\medskip

{\textbf{(5) $S=E_8(q)$:}}

\smallskip

Then $e\in\{1,2\}$ for which $k_e=8$, or $e\in\{3,4,6\}$ for which
$k_e=4$, or $e\in\{5,8,10,12\}$ for which $k_e=2$. The standard
approach as above works for all $e$ with $k_e>2$.

Assume that $e\in\{5,10\}$. Then a Sylow $e$-torus of $S$ is maximal
and its relative Weyl group is $G_{16}\cong \SL_2(5)\times C_5$. A
similar proof to the case of type $G_2$ yields $k(B_0(G))\geq
2\sqrt{{45(p^2-1)}/{600}}$, which is certainly greater than
$2\sqrt{p-1}$ for $p\geq 13$. On the other hand, we always have
$k(B_0(G))\geq 45>2\sqrt{p-1}$ for smaller $p$, and thus the desired
bound holds for all $p$. Finally, the case $e\in\{8,12\}$ is
entirely similar, with notice that the relative Weyl groups of Sylow
$e$-tori are $G_9=C_8.\Sy_4$ and $G_{10}=C_{12}.\Sy_4$ for $e=8$ and
$12$, respectively.
\end{proof}

Theorem \ref{main-thm-simple} is now completely proved.

%%%%%%%%%%%%%%%%%%%%%%%%%%%%%%%%%%%%%%%%%%%%%%%%%%%%%%%%%%%%%%%

\section{Proof of Theorems \ref{main-theorem} and \ref{theorem:equality}}\label{sec:proof1.1}

We are now ready to prove the main results.

\begin{proof}[Proof of Theorems \ref{main-theorem} and \ref{theorem:equality}]
First we remark that the `if' implication of
Theorem~\ref{theorem:equality} is clear, and moreover, we are done
if the Sylow $p$-subgroups of $G$ are cyclic, thanks to
Subsection~\ref{sec:AM}.

Let $(G,p)$ be a counterexample to either Theorem~\ref{main-theorem}
or the `only if' implication of Theorem \ref{theorem:equality} with
$|G|$ minimal. Let $N$ be a minimal normal subgroup of $G$. In
particular, Sylow $p$-subgroups of $G$ are not cyclic and
$k(B_0(G))\leq 2\sqrt{p-1}$.

Assume first that $p\mid |G/N|$. Then, since
$\Irr(B_0(G/N))\subseteq \Irr(B_0(G))$ and by the minimality of
$|G|$, we have
\[2\sqrt{p-1}\geq k(B_0(G))\geq k(B_0(G/N))\geq 2\sqrt{p-1},\] and
thus \[ k(B_0(G))= k(B_0(G/N))= 2\sqrt{p-1}.
\] The minimality of $G$ again then implies that $G/N$ is isomorphic to the
Frobenius group $C_p\rtimes C_{\sqrt{p-1}}$. It follows that $p\mid
|N|$, and thus there exists a nontrivial irreducible character
$\theta\in \Irr(B_0(N))$. As $B_0(G)$ covers $B_0(N)$, there is some
$\chi\in \Irr(B_0(G))$ lying over $\theta$, implying that
$k(B_0(G))>k(B_0(G/N))$, a contradiction.

So we must have $p\nmid |G/N|$, and it follows that $p\mid |N|$.
This in fact also yields that $N$ is a unique minimal normal
subgroup of $G$. Assume first that $N$ is abelian. We then have that
$G$ is $p$-solvable, and hence Fong's theorem (see \cite[Theorem
10.20]{Navarro}) implies that
\[
k(B_0(G))=k(B_0(G/\pcore_{p'}(G)))=k(G/\pcore_{p'}(G)),
\]
which is greater than $2\sqrt{p-1}$ by the main result of
\cite{Maroti16}.

We now may assume that $N\cong S_1\times S_2\times \cdots \times
S_k$, a direct product of $k\in \NN$ copies of a non-abelian simple
group $S$. If $S$ has cyclic Sylow $p$-subgroups, then $G$ is not a
counterexample for Theorem~\ref{main-theorem} by
Lemma~\ref{lemma-KS}, and furthermore, Sylow $p$-subgroups of $G$
are abelian, implying that \[ k(B_0(G))\geq
k(\bN_G(P)/\bO_{p'}(\bN_G(P)))>2\sqrt{p-1}
\] by the analysis in Subsection~\ref{sec:AM}, \cite{Maroti16} again,
and the fact that $P\in\Syl_p(G)$ is not cyclic.

So the Sylow $p$-subgroups of $S$ are not cyclic. Let $n$ be the
number of $\bN_G(S_1)/N$-orbits on $\Irr(B_0(S_1))$. By
Theorem~\ref{main-thm-simple}(ii), we have $n\geq 2(p-1)^{1/4}$.
Therefore, if $k\geq 2$, the number of $G$-orbits on
$\Irr(B_0(N))=\prod_{i=1}^k \Irr(B_0(S_i))$ is at least
$n(n+1)/2\geq 2(p-1)^{1/4}(2(p-1)^{1/4}+1)/2>2\sqrt{p-1}$, and it
follows that $k(B_0(G))>2\sqrt{p-1}$, a contradiction. Hence, $N=S$
and $G$ is then an almost simple group with socle $S$. Furthermore,
$p\nmid |G/S|$. But such a group $G$ cannot be a counterexample by
Theorem~\ref{main-thm-simple}(i). The proof is complete.
\end{proof}

With Theorem \ref{main-theorem} in mind, it follows that for any
$p$-block $B$ for a finite group such that $B$ shares its invariants
with the the principal block of some finite group $H$ (or even just
satisfying $k(B)=k(B_0(H))$), we have $k(B)\geq 2\sqrt{p-1}$.  In
particular, we may record the following:

\begin{corollary}
Let $G$ be one of the classical groups $\GL_n(q)$, $\GU_n(q)$,
$\Sp_{2n}(q)$, $\SO_{2n+1}(q)$, or $\GO_{2n}^\pm(q)$.  Let $p$ be a prime dividing $|G|$ and not dividing $q$.  Then for any
$p$-block $B$ of $G$ with positive defect, we have $k(B)\geq 2\sqrt{p-1}$.
\end{corollary}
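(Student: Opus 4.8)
The plan is to reduce the statement to Theorem~\ref{main-theorem} via the general principle articulated in the sentence preceding the corollary: if a $p$-block $B$ of a finite group satisfies $k(B)=k(B_0(H))$ for the principal $p$-block of \emph{some} finite group $H$, then $k(B)\geq 2\sqrt{p-1}$. So the whole task is to show that every positive-defect $p$-block of one of the listed classical groups $G$ shares its character count with a principal block of an appropriate smaller classical group. This is precisely the content of the Fong--Srinivasan-type description of blocks of classical groups: for $G=\GL_n(q)$ or $\GU_n(q)$ (with $p\nmid q$), a $p$-block $B$ is determined by a semisimple label and a ``weight'', and $k(B)$ depends only on $p$ and the weight through the function $k(x,y)$ from Section~\ref{sec:alt}; the same holds for the symplectic and orthogonal groups by \cite{FS82}, \cite{Olsson84}, \cite{Michler-Olsson}, \cite{malle17}. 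In fact the formulas quoted in the proofs of Propositions~\ref{prop:linear} and~\ref{prop:classical} already give $k(B_0(\wt G))=k(e'+(p^a-1)/e',w)$ and $k(B_0(\bar H))=k(2e+(p^a-1)/2e,w)$, and the analogous formula for an arbitrary positive-defect block $B$ replaces $w$ by the weight $w(B)$ and leaves the ``$e'+(p^a-1)/e'$'' (resp.\ ``$2e+(p^a-1)/2e$'') factor unchanged.

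First I would fix notation: let $e$ be the order of $q$ (or $\overline q$, in the unitary case) modulo $p$ in the relevant sense, and set $p^a$ to be the corresponding $p$-part, exactly as in Sections~\ref{sec:linear-unitary} and~\ref{sec:symp-orthog}. For $G=\GL_n^\epsilon(q)$, recall from \cite[Theorem~(7A)]{FS82} and \cite[Proposition~6]{Olsson84} that a $p$-block $B$ with weight $w(B)$ has $k(B)=k\big(e'+(p^a-1)/e',\,w(B)\big)$, where the first argument is independent of $B$ and depends only on $(G,p)$. Since $B$ has positive defect, $w(B)\geq 1$, and one may then choose $n_0:=e'w(B)$ and $H:=\GL^\epsilon_{n_0}(q)$; by the very computation in Proposition~\ref{prop:linear} (using \cite[Theorem~1.9]{Michler-Olsson} to pass to $n=e'w$), $k(B_0(H))=k\big(e'+(p^a-1)/e',\,w(B)\big)=k(B)$. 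Hence $k(B)=k(B_0(H))\geq 2\sqrt{p-1}$ by Theorem~\ref{main-theorem}. The symplectic/orthogonal cases are identical in structure: by \cite[Propositions~5.4 and~5.5]{malle17} a positive-defect $p$-block $B$ has $k(B)=k\big(2e+(p^a-1)/2e,\,w(B)\big)$ with $w(B)\geq 1$, and $H:=\GO_{2ew(B)}^+(q)$ (or the symplectic analogue) is a classical group with $k(B_0(H))$ equal to the same quantity, so again $k(B)\geq 2\sqrt{p-1}$.

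Alternatively, and perhaps more cleanly, one can avoid constructing $H$ explicitly and argue by monotonicity: the quantity $k\big(c,w\big)$ (number of $c$-tuples of partitions of $w$) is nondecreasing in $w$, so for any positive-defect block $B$ one has $k(B)=k(c,w(B))\geq k(c,1)=c=e'+(p^a-1)/e'$ (resp.\ $2e+(p^a-1)/2e$), and then $c\geq 2\sqrt{p^a-1}\geq 2\sqrt{p-1}$ by AM--GM, exactly as in \eqref{eq:step1}. This is really the same argument with the reference to Theorem~\ref{main-theorem} replaced by the elementary inequality, and it is self-contained. I would present the monotonicity version as the main line, remarking that it also follows from Theorem~\ref{main-theorem} plus the block-count formulas.

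The main obstacle is bookkeeping rather than conceptual: one must quote the block-theoretic classification uniformly across all five families, being careful about (a) the unitary twist in the definition of $e'$ and $p^a$, (b) the distinction between $\GO_{2n}^\pm$ and its index-two subgroup $\Omega$ (but since the corollary is stated for $\GO_{2n}^\pm$ itself, \cite[Propositions~5.4 and~5.5]{malle17} apply directly and there is no passage to $\Omega$ needed), and (c) ensuring that in every case the ``non-$w$'' argument of $k(\cdot,\cdot)$ is genuinely $\geq 2\sqrt{p-1}$, which is where the inequality $p^a\geq p$ is used. A minor point to check is that $p\mid|G|$ together with $p\nmid q$ forces $e\leq n$ (so that blocks of positive defect exist and the weight is well-defined and positive), which is immediate from the order formulas for the classical groups. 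Once these ingredients are lined up, the proof is a two-line deduction for each family.
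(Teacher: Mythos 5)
Your strategy of reducing to Theorem~\ref{main-theorem} is the same as the paper's, but the block-count formulas you invoke are stated more strongly than the cited results actually give, and in the symplectic/orthogonal cases this creates a real gap. For $G=\GL_n^\epsilon(q)$, a block $B$ with nontrivial semisimple label $s$ is not governed by a single pair $(c,w(B))$: the centralizer of $s$ is a product of groups $\GL^\pm_{m_F}(q^{d_F})$ indexed by elementary divisors $F$, and what \cite[Theorem~1.9]{Michler-Olsson} actually says is that $k(B)=k(B_0(H))$ for $H$ a \emph{product} of lower-rank general linear and unitary groups over various extensions, so $k(B)$ is a product of terms $k(c_F,w_F)$ whose first arguments $c_F$ depend on the $d_F$'s and need not coincide. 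The conclusion still holds --- some factor has $w_F\geq 1$ and $p$-content, hence contributes $\geq 2\sqrt{p-1}$ by Theorem~\ref{main-theorem} (or by your AM--GM estimate applied to that factor) --- but the intermediate identity ``$k(B)=k(c,w(B))$ with $c$ depending only on $(G,p)$'' is not what \cite{Olsson84} or \cite{Michler-Olsson} assert for $s\neq 1$, and your monotonicity version inherits the same imprecision.

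The more substantial problem is for $\Sp_{2n}(q)$, $\SO_{2n+1}(q)$ and $\GO^\pm_{2n}(q)$: you attribute to \cite[Propositions~5.4 and~5.5]{malle17} a formula for $k(B)$ valid for \emph{all} positive-defect blocks, but those propositions only treat \emph{unipotent} blocks. For a non-unipotent block $B$ of these groups an additional reduction is needed. The paper's proof of this corollary uses Enguehard's Jordan decomposition of blocks \cite[Th\'eor\`eme~1.6]{enguehard08}: one takes the semisimple $p'$-label $(s)$ of $B$, passes to a group $G(s)$ dual to $\bC_{G^\ast}(s)$, and obtains a unipotent block $b$ of $G(s)$ with $k(B)=k(b)$; since $G(s)$ is again a product of lower-rank classical groups of the families in question, the unipotent case (and ultimately Theorem~\ref{main-theorem}) then applies. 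Without this step your argument proves the corollary only for unipotent blocks of the symplectic and orthogonal groups, which is a genuine gap.
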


\begin{proof}
If $p=2$, then the statement is clear, so we assume $p$ is odd.  First, if $G=\GL_n(q)$ or $\GU_n(q)$, the statement follows immediately from
Theorem \ref{main-theorem} and \cite[Theorem (1.9)]{Michler-Olsson},
which states that $B$ has the same block invariants as the principal
block of a product of lower-rank general linear and unitary groups.

Now suppose that $G$ is $\Sp_{2n}(q)$, $\SO_{2n+1}(q)$, or
$\GO_{2n}^\pm(q)$.  If $B$ is a unipotent block, then by
\cite[Proposition 5.4 and 5.5]{malle17}, $B$ has the same block
invariants as an appropriate general linear group.  (In the case
$\GO_{2n}^\pm(q)$, we define a unipotent block to be one lying above
a unipotent block of $\SO_{2n}^\pm(q)$.)  Hence the statement holds
if $B$ is a unipotent block.

Now, the block $B$ determines a class of semisimple $p'$-elements
$(s)$ of the dual group $G^\ast$ (see \cite[Theorem
9.12]{Cabanes-book}) such that $B$ contains some member of
$\mathcal{E}(G, (s))$.  By \cite[Th{\'e}or{\`e}me 1.6]{enguehard08},
there exists a group $G(s)$ dual to $\bC_{G^\ast}(s)$ such that
$k(B)=k(b)$ for some unipotent block $b$ of $G(s)$.  Now, in the
cases under consideration, $\bC_{G^\ast}(s)$ and $G(s)$ are direct
products of lower-rank classical groups of the types being
considered here, completing the proof.
\end{proof}

%%%%%%%%%%%%%%%%%%%%%%%%%%%%%%%%%%%%%%%%%%%%%%%%%%%%%%%%%%%%%%%%%%%%%%%%%%%%%%%

\end{document}